\newtheorem{thm}{Theorem}[section]
\newtheorem{proposition}[thm]{Proposition}
\newtheorem{defi}[thm]{Definition}
\newcommand{\bequ}{\begin{equation}}
\newcommand{\eequ}{\end{equation}}
\newcommand{\norm}[1]{\left\lVert#1\right\rVert}
\begin{document}

	
	\begin{center}
		{\Large
			{\sc  \bf{Distribution regression model with a Reproducing Kernel Hilbert Space approach}}
		}
		\bigskip
		
		Bui Thi Thien Trang $^{1}$ \& Jean-Michel Loubes $^{1}$ \& Laurent Risser $^{1}$ \& Patricia Balaresque $^{2}$ 
		\bigskip
		
		{\it
			$^{1}$ Institut de Math\'ematiques de Toulouse \\
			Universit\'e Paul Sabatier 118, route de Narbonne F-31062 Toulouse Cedex 9\\
			(tbui, jean-michel.loubes, laurent.risser)@math.univ-toulouse.fr
			
			$^{2}$ Laboratoire d'Anthropologie Mol\'eculaire et Imagerie de Synth\`ese (AMIS)\\
			Facult\'e de M\'edecine Purpan, 37 all\'ees Jules Guesde, Toulouse \\
			patricia.balaresque@univ-tlse3.fr
		}
	\end{center}
	\bigskip
	

	{\bf Abstract.} In this paper, we introduce a new distribution regression model for probability distributions. 
	This model is based on a Reproducing Kernel Hilbert Space (RKHS) regression framework, where universal kernels are built using Wasserstein distances for distributions belonging to $\mathcal{W}_2(\Omega)$ and  $\Omega$ is a compact subspace of $\mathbb{R}$. 
	We prove the universal kernel property of such kernels and use this setting to perform regressions on functions.  
	Different regression models are first compared with the proposed one on simulated functional data for both one-dimensional and two-dimensional distributions. We then apply our regression model to transient evoked otoascoutic emission (TEOAE) distribution responses and real predictors of the age.  
	\smallskip
	
	{\bf Keywords.} Regression, Reproducing kernel Hilbert space,  Wasserstein distance, Transient evoked otoscoutic emission.
	

\section{Introduction}\label{sec1}
	Regression analysis is a predictive modeling technique that has been widely studied over the last decades with the goal to investigate relationships between predictors and responses (inputs and outputs) in regression models, see for instance  \cite{kutner2004applied}, \cite{azais2006modele} and references therein. When the inputs belong to functional spaces, different strategies have been investigated and used in several application domains about functional data analysis \cite{neter1996applied}, \citet{ramsay2007applied}. The Reproducing Kernel Hilbert Space (RKHS) framework became recently popular to extend the results of the statistical learning theory in the context of regression of functional data as well as to develop estimation procedures of functional valued functions $f$ as in~\cite{preda2007regression, kadri2010nonlinear} for instance. This framework is particularly important in the field of statistical learning theory because of the so-called \emph{Representer theorem}, which states that every function can be written as a linear combination of the kernel function evaluated at training points \cite{berlinet2011reproducing}, providing a setting to estimate functions over a wide range of functional spaces.
	
	In our framework, we aim to solve the regression problem with inputs belonging to probability distribution spaces whose outputs are real values. Specially, we consider the model
	\begin{equation}\label{equa1}
	y_i = f(\mu_i)+\sigma \epsilon_i,
	\end{equation}
	 where $\{\mu_i\}_{i=1}^n$ are probability distributions on $\mathbb{R}$, $\{y_i\}_{i=1}^n$ are real numbers and the $\epsilon_i$ represent an independent and identically distributed Gaussian noise and $\sigma$ is the level of noise. As in classical regression models, this setting estimates an unknown function $f$ from the observations $\{\left(\mu_i, y_i\right) \}_{i=1}^n$.
	
	Distribution regression problem has become a major concern in the recent years. A common approach  is to  look at embedding  into a Hilbert space  using kernels as for instance in \cite{smola2007hilbert} with the mean embedding or see in~\cite{muandet2012learning} for a review. The studies for regression with distribution inputs and real-valued responses are one of the most popular research topics in this field, for instance \cite{poczos2013distribution} and \cite{oliva2014fast}. For more information about distribution regression theoretical properties, we refer to \cite{szabo2016learning} and \cite{muandet2017kernel}.
	
	In what follows, we will consider kernels built using the Wasserstein distance. (see in~\cite{villani2008optimal}). Some kernels with this metric have been developed in  \citet{KolouriZouRohde,peyre2016gromov}. In studies of \cite{KolouriZouRohde}, authors built a sliced Wasserstein kernel based on Wasserstein distance, which was proved be a positve definite kernel for absolutely continuous distributions. More generally, for any distributions, authors in \citet*{bachoc2017gaussian} built a family of positive definite kernel based also on Wasserstein distance for distributions on the real line. The properties of characteristic kernels were explored further in \citet{sriperumbudur2008injective, sriperumbudur2010hilbert, sriperumbudur2011universality}. Besides that, a closely related to these kernel for achieving the richness of RKHS was previously studied through the so-called universal kernels in \cite{steinwart2001influence}. Universality property for kernels is stronger than the characteristic property. Actually, using results in~\cite{sriperumbudur2010relation} all universal kernels are characteristic. Within the framework of \cite{christmann2010universal}, the authors prove universality of Gaussian-type RBF-kernel. Futher, combining the positive definite kernel built by \citet*{bachoc2017gaussian} with the Gaussian-type RBF-kernel in \cite{christmann2010universal}, we consider a universal kernel based on the Wasserstein distance between two probability distributions on a compact space.

	 The paper falls into the following parts. In Section~\ref{sec2}, we first recall important concepts about kernels on Wasserstein spaces $\mathcal{W}_2(\mathbb{R})$. Section~\ref{sec3} deals with the proposed setting of distribution regression models.  We then assess the numerical performance of this method in Section~\ref{sec4}. The tests are first performed on simulated generated data in one and two dimensions to compare our model with state-of-the-art ones. Then, we study the relationship between the age and hearing sensitivity by using TEOAEs recording that are acquired by stimulating with a very short but strong broadband stimulus. These recordings are then the ear responses by emitting a sound track on a given frequency. More precisely, we predict the age of the subjects, on which they were acquired using the proposed distribution regression model, from TEOAE data. Discussions are finally drawn Section~\ref{sec5}.
	

\section{Kernel on Wasserstein space $\mathcal{W}_2(\mathbb{R})$}\label{sec2}
\subsection{The Wasserstein space on $\mathbb{R}$} \label{2.1}
	Let us consider the set $\mathcal{W}_2(\mathbb{R})$ of probability measures on $\mathbb{R}$ with a finite moment of order two. For two $\mu, \nu$ probability distributions in $\mathcal{W}_2(\mathbb{R})$, we denote $\Pi(\mu,\nu)$ the set of all probability measures $\pi$ over the product set $\mathbb{R}\times \mathbb{R}$ with first (resp. second) marginal $\mu$ (resp. $\nu$).\\
	The transportation cost with quadratic cost function, which we denote quadratic transportation cost, between two measures $\mu$ and $\nu$ is defined as:
	
	\begin{equation}\label{cost}
	\mathcal{T}_2(\mu,\nu)=\underset{\pi \in \Pi(\mu,\nu)}{\text{inf}} \int |x-y|^2d\pi (x,y).
	\end{equation}
	This transportation cost allows to endow the set $\mathcal{W}_2(\mathbb{R})$ with a metric by defining the quadratic Monge-Kantorovich (or quadratic Wasserstein)   distance between $\mu$ and $\nu$ as
	
	\begin{equation*}
	W_2(\mu,\nu) = \mathcal{T}_2(\mu,\nu)^{1/2}.
	\end{equation*}
	A probability measure $\pi$ in $\Pi(\mu,\nu)$ performing the infimum in (\ref{cost}) is called an optimal coupling. This vocabulary transfers to a random vector $(X_1,X_2)$ with distribution $\pi$. We will call $\mathcal{W}_2(\mathbb{R})$ endowed with the distance $W_2$ the Wasserstein space. More details on Wasserstein spaces and their links with optimal transport problems can be found in \cite{villani2008optimal}.\\
  
	For distributions in $\mathbb{R}$, the Wasserstein distance can be written in a simpler way as follows: For any $\mu \in \mathcal{W}_2(\mathbb{R})$, we denote by $F^{-1}_\mu$ the quantile function associated to $\mu$. Given a uniform random variable $U$ on $[0,1]$, $F^{-1}_\mu (U)$ is the random variable with law $\mu$. Then,  for every $\mu$ and $\nu$  the random vector $(F^{-1}_\mu (U), F^{-1}_\nu (U))$ is the optimal coupling (see \cite{villani2008optimal}), where $F^{-1}$ is defined as
	\begin{equation}\label{quant}
	F^{-1}_\mu (t) = \text{inf}\{u,F_\mu(u) \geq t \}.
	\end{equation}
	In this case, the simplest expression for the Wasserstein distance is  given in \cite{whitt1976bivariate}:
	\begin{equation}
	W_2(\mu,\nu) = \mathbb{E}(F^{-1}_\mu(U) - F^{-1}_\nu(U))^2.
	\end{equation}

	Topological properties of Wasserstein spaces are reviewed  in \cite{villani2008optimal}. Hereafter, compacity will be required and will be obtained as follows: let    $\Omega \subset \mathbb{R}$ be a compact subset, then the Wasserstein space $\mathcal{W}_2(\Omega)$ is also compact. In this paper, we consider Wasserstein spaces $\mathcal{W}_2(\Omega)$, where $\Omega$  is a compact subset on $\mathbb{R}$ endowed with the Wasserstein distance $W_2$. Hence for any $\mu\in \mathcal{W}_2(\Omega)$, we denote $F_\mu|_\Omega : \Omega \rightarrow [a,b]$ with $[a,b]\subset [0,1]$ the distribution function restricted on a compact subset $\Omega$. We also define $F^{-1}_\mu|_\Omega$ as:
	\begin{equation}\label{quant2}
	F_\mu^{-1}|_\Omega(t)= inf\{u\in \Omega,F_\mu|_\Omega(u) \geq t \}, \, \forall t\in [a,b].
	\end{equation}
	Given a uniform random variable $V$ on $[a,b]$, $F_\mu^{-1}|_\Omega$ is a random variable with law $\mu$. By inheriting properties from $\mathcal{W}_2(\mathbb{R})$ for every $\mu$ and $\nu$ in $\mathcal{W}_2(\Omega)$, the random vector $\left(F_\mu^{-1}|_\Omega(V), F_\nu^{-1}|_\Omega(V) \right)$ is an optimal coupling. In this case, we consider in this paper the simplest expression for the Wasserstein distance between $\mu$ and $\nu$ in $\mathcal{W}_2(\Omega)$:
	\begin{equation}\label{wassrdist}
	W_2(\mu,\nu) = \mathbb{E}(F_\mu^{-1}|_\Omega(V) - F_\nu^{-1}|_\Omega(V))^2.
	\end{equation}

	\subsection{Kernel}
	Constructing a positive definite kernel defined on the Wasserstein space is not obvious and  was recently done in \cite{bachoc2017gaussian}. For sake of completeness, we recall here briefly this construction. 
	\begin{thm}\label{exp.mono}
		Let $k_\Theta: \mathcal{W}_2(\Omega) \times \mathcal{W}_2(\Omega) \rightarrow \mathbb{R}$ with the parameter $\Theta:= (\gamma, H,l)$ such that $\gamma \neq 0$ and $l>0$ defined as
		\begin{equation}\label{kernel}
		k_\Theta(\mu,\nu) := \gamma^2 \exp\left(-\frac{W_2^{2H}(\mu,\nu)}{l} \right).
		\end{equation}
	Then for 	$0 < H\leq 1$, $k_\Theta$	is a positive definite kernel. 
	\end{thm}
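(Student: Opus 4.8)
The plan is to reduce the statement to two classical facts from the theory of positive and negative definite kernels: Schoenberg's theorem, and the stability of conditionally negative definite kernels under fractional powers. The linchpin is the observation, already implicit in equation~\eqref{wassrdist}, that on the real line the Wasserstein metric is \emph{Hilbertian}: the map $\mu \mapsto F_\mu^{-1}|_\Omega$ sends $\mathcal{W}_2(\Omega)$ isometrically into the Hilbert space $L^2([a,b])$, so that
\begin{equation*}
W_2^2(\mu,\nu) = \norm{F_\mu^{-1}|_\Omega - F_\nu^{-1}|_\Omega}_{L^2([a,b])}^2 .
\end{equation*}
First I would record that the factor $\gamma^2>0$ is harmless, since multiplying a positive definite kernel by a positive constant preserves positive definiteness; it therefore suffices to treat $\exp(-W_2^{2H}/l)$.

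Recall that a symmetric kernel $\psi$ is \emph{conditionally negative definite} (CND) if $\sum_{i,j} c_i c_j \psi(\mu_i,\mu_j)\le 0$ whenever $\sum_i c_i = 0$. The second step is to show that $\psi := W_2^{2H}$ is CND for $0<H\le 1$. Through the isometric embedding above this amounts to showing that $\norm{x-y}^{2H}$ is CND on a Hilbert space. Since the squared Hilbert norm $\norm{x-y}^2$ is CND (a direct computation, the constraint $\sum_i c_i = 0$ killing the diagonal contributions $\norm{x_i}^2$ and $\norm{x_j}^2$), and since $W_2^2\ge 0$, I would invoke the fact that a nonnegative CND kernel raised to any power in $(0,1]$ remains CND. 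Concretely this follows from the subordination identity
\begin{equation*}
t^{H} = \frac{H}{\Gamma(1-H)}\int_0^\infty \frac{1-e^{-ut}}{u^{1+H}}\,du, \qquad t\ge 0,\ 0<H<1,
\end{equation*}
applied to $t=W_2^2$: for each fixed $u>0$ the Gaussian-type kernel $e^{-u W_2^2}$ is positive definite (Schoenberg applied to the CND kernel $W_2^2$), hence $1-e^{-uW_2^2}$ is CND, and integrating against the positive measure $u^{-1-H}\,du$ preserves the CND property. The case $H=1$ is immediate, since $W_2^2$ is itself CND. This is precisely where the hypothesis $H\le 1$ is essential: the exponent $2H$ must not exceed $2$.

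The final step is to apply Schoenberg's theorem in the forward direction: a symmetric kernel $\psi$ is CND if and only if $\exp(-t\psi)$ is positive definite for every $t>0$. Taking $t=1/l>0$ and $\psi=W_2^{2H}$, the CND property established above yields that $\exp(-W_2^{2H}/l)$ is positive definite, and reinstating the factor $\gamma^2$ completes the argument.

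I expect the main obstacle to be the second step, the fractional-power stability of the CND property; everything else is essentially bookkeeping. The subordination representation is the cleanest self-contained route, but one must justify the interchange of the finite summation with the integral, check the nonnegativity of the integrand, and verify that the boundary case $H=1$ and the degenerate diagonal $W_2(\mu,\mu)=0$ are correctly handled. It is worth emphasizing that the Hilbertian nature of $W_2$ on $\mathbb{R}$—which fails in dimension $\ge 2$—is exactly what makes the whole argument go through.
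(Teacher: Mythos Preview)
Your proposal is correct and follows essentially the same skeleton as the paper: embed $\mathcal{W}_2(\Omega)$ isometrically into $L^2$ via quantile functions to see that $W_2^2$ is conditionally negative definite, deduce that $W_2^{2H}$ is CND for $0<H\le 1$, and then apply Schoenberg to conclude. The only noteworthy difference is that the paper invokes the completely-monotone-function form of Schoenberg (Theorem~\ref{scho}) and, in its proof of Proposition~\ref{theoW}, treats only $H=1$ explicitly before asserting the general case; your subordination argument for the fractional-power step is therefore more complete than what the paper actually writes down, and is a welcome addition rather than a deviation.
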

	The proof of this Theorem directly follows Theorem~\ref{scho} and Propositions~\ref{theoW}. In this paper we use Theorem~\ref{exp.mono} to study the properties of such kernel in the RKHS regression framework. 
  
	\begin{thm} \label{scho}
		(\textbf{Schoenberg}) Let $F: \mathbb{R}^{+} \rightarrow \mathbb{R}^{+}$ be a completely monotone function, and $K$ a negative definite kernel. Then $(x,y) \mapsto F(K(x,y))$ is a positive definite kernel.
	\end{thm}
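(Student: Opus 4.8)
The plan is to combine two classical ingredients: the Hausdorff--Bernstein--Widder representation of a completely monotone function as a Laplace transform, and the elementary fact that the exponential of a negative definite kernel (with a minus sign) is positive definite. Recall that $K$ negative definite means it is symmetric and $\sum_{i,j} c_i c_j K(x_i,x_j) \le 0$ whenever $\sum_i c_i = 0$, while $F$ completely monotone means $F \in C^\infty((0,\infty))$ with $(-1)^n F^{(n)} \ge 0$ for every $n$. By Bernstein--Widder, complete monotonicity is equivalent to the existence of a positive Borel measure $\mu$ on $[0,\infty)$ such that $F(t) = \int_0^\infty e^{-ts}\, d\mu(s)$. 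Substituting $t = K(x,y)$ gives $F(K(x,y)) = \int_0^\infty e^{-s K(x,y)}\, d\mu(s)$, which reduces the theorem to showing that $(x,y)\mapsto e^{-sK(x,y)}$ is positive definite for each fixed $s\ge 0$: once this holds, for any weights $c_1,\dots,c_n$ the quantity $\sum_{i,j} c_i c_j F(K(x_i,x_j)) = \int_0^\infty \big(\sum_{i,j} c_i c_j\, e^{-sK(x_i,x_j)}\big)\, d\mu(s)$ is the integral of a nonnegative integrand against a positive measure, hence nonnegative. Symmetry of $F\circ K$ is inherited directly from that of $K$.

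The heart of the proof is thus the lemma that $e^{-sK}$ is positive definite. I would fix a base point $x_0$ and set $\psi(x,y) := K(x,x_0) + K(x_0,y) - K(x,y) - K(x_0,x_0)$. Writing $h(x) := K(x,x_0) - \tfrac12 K(x_0,x_0)$ and using the symmetry of $K$, one obtains the identity $K(x,y) = h(x) + h(y) - \psi(x,y)$, so that $e^{-sK(x,y)} = \big(e^{-sh(x)}\, e^{-sh(y)}\big)\, e^{s\psi(x,y)}$. The first factor is a rank-one kernel of the form $g(x)g(y)$ and is therefore positive definite. The second factor expands as $e^{s\psi} = \sum_{k\ge 0}(s^k/k!)\,\psi^k$; by the Schur product theorem each power $\psi^k$ of a positive definite kernel is positive definite, and nonnegative combinations preserve positive definiteness, so $e^{s\psi}$ is positive definite. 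A final application of the Schur product theorem to the product of the two factors yields the claim.

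The main obstacle is establishing that $\psi$ itself is positive definite, since negative definiteness of $K$ is assumed only for weights summing to zero, whereas positive definiteness of $\psi$ must hold for \emph{all} weights. This is handled by the standard centering trick: given arbitrary $c_1,\dots,c_n$ at $x_1,\dots,x_n$, adjoin the extra weight $c_0 := -\sum_{i\ge 1} c_i$ at the base point $x_0$, so that $\sum_{i=0}^n c_i = 0$ and negative definiteness applies. Expanding $\sum_{i,j=0}^n c_i c_j K(x_i,x_j) \le 0$ and substituting $c_0 = -\sum_{i\ge 1}c_i$ makes the terms involving $x_0$ reorganize, after symmetrizing via $K(x_0,x_i)=K(x_i,x_0)$, exactly into $\sum_{i,j\ge 1} c_i c_j\big(K(x_i,x_j) - K(x_i,x_0) - K(x_0,x_j) + K(x_0,x_0)\big) = -\sum_{i,j\ge1} c_i c_j\, \psi(x_i,x_j)$. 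The inequality $\le 0$ then gives $\sum_{i,j\ge 1} c_i c_j\,\psi(x_i,x_j) \ge 0$ for arbitrary weights, which is precisely positive definiteness of $\psi$ and closes the argument.
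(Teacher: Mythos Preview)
Your proposal is correct and is the standard proof of Schoenberg's theorem via the Hausdorff--Bernstein--Widder integral representation combined with the centering trick showing that $\psi(x,y)=K(x,x_0)+K(x_0,y)-K(x,y)-K(x_0,x_0)$ is positive definite. All the steps are sound: the decomposition $K(x,y)=h(x)+h(y)-\psi(x,y)$ checks out using symmetry of $K$; the adjoined-weight computation yielding $\sum_{i,j\ge1}c_ic_j\,\psi(x_i,x_j)\ge0$ is exactly right; and the passage from positive definiteness of $\psi$ to that of $e^{s\psi}$ and then of $e^{-sK}$ via Schur products is clean.

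There is, however, nothing to compare against: the paper does not prove this theorem. Its entire ``proof'' is the sentence ``We refer to \cite{cowling1983harmonic} for more details on completely monotone functions and a proof of Theorem~\ref{scho}.'' You have in effect filled in what the paper outsources to the literature, and the argument you give is the same one that appears in the standard references (e.g., Berg--Christensen--Ressel), so it is entirely in line with what the citation points to.
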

\begin{proof}
	We refer to \cite{cowling1983harmonic} for more details on completely monotone functions and a proof of Theorem \ref{scho}. We use this result to give a proof of the positive definite kernel which is presented in Theorem \ref{exp.mono}.
\end{proof}
	
  The following proposition which can be found a general version for any distribution in $\mathcal{W}_2(\mathbb{R})$ in \cite{bachoc2017gaussian}, here we present again a result for our case in $\mathcal{W}_2(\Omega)$. Finally we give conditions on the exponent $H$ to achieve the negative definite kernel using  exponents of the Wasserstein distance.
	\begin{proposition}\label{theoW}
		The function $W^{2H}_2$ is a negative definite kernel if and only if  $0 < H\leq 1$.
	\end{proposition}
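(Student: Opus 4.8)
The plan is to treat the two implications separately, exploiting the isometric embedding of $\mathcal{W}_2(\Omega)$ into a Hilbert space that is implicit in~(\ref{wassrdist}). Recall that a symmetric kernel $\psi$ is \emph{negative definite} when $\sum_{i,j} c_i c_j \psi(\mu_i,\mu_j) \le 0$ for every finite family $\mu_1,\dots,\mu_n$ and all reals $c_1,\dots,c_n$ with $\sum_i c_i = 0$. The crucial observation is that the map $T:\mu \mapsto F_\mu^{-1}|_\Omega$ sends $\mathcal{W}_2(\Omega)$ into $L^2([a,b])$ and, by~(\ref{wassrdist}), is an isometry in the sense that $W_2^2(\mu,\nu) = \norm{T(\mu)-T(\nu)}_{L^2}^2$.

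For the sufficiency ($0<H\le 1$), I would first show that $W_2^2$ is negative definite. This is the classical fact that a squared Hilbert-space distance is negative definite: expanding $\norm{T(\mu_i)-T(\mu_j)}^2 = \norm{T(\mu_i)}^2 + \norm{T(\mu_j)}^2 - 2\langle T(\mu_i),T(\mu_j)\rangle$ and using $\sum_i c_i = 0$, the two squared-norm terms vanish and one is left with $-2\norm{\sum_i c_i T(\mu_i)}^2 \le 0$. I would then promote this to the fractional power $W_2^{2H} = (W_2^2)^H$ via the integral representation $t^H = C_H \int_0^\infty (1-e^{-st})\, s^{-1-H}\,ds$, valid with $C_H>0$ for $0<H<1$ and $t\ge 0$. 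For each fixed $s>0$, Schoenberg's theorem (Theorem~\ref{scho}) gives that $e^{-s W_2^2}$ is positive definite, whence $1-e^{-sW_2^2}$ is negative definite, since $\sum_{i,j} c_i c_j \bigl(1 - e^{-sW_2^2(\mu_i,\mu_j)}\bigr) = (\sum_i c_i)^2 - \sum_{i,j}c_ic_j e^{-sW_2^2(\mu_i,\mu_j)} \le 0$ under $\sum_i c_i=0$. Integrating against the positive measure $s^{-1-H}\,ds$, and using that the cone of negative definite kernels is stable under nonnegative combinations and pointwise limits, yields negative definiteness of $W_2^{2H}$; the case $H=1$ is already covered.

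For the necessity I would argue by contraposition and exhibit an explicit counterexample when $H>1$. Restricting $W_2^{2H}$ to Dirac masses is the key reduction: for $x,y\in\Omega$ one has $F_{\delta_x}^{-1}|_\Omega \equiv x$, so $W_2(\delta_x,\delta_y)=|x-y|$ and therefore $W_2^{2H}(\delta_x,\delta_y)=|x-y|^{2H}$. Hence negative definiteness of $W_2^{2H}$ would force $|x-y|^{2H}$ to be negative definite on $\Omega$. Choosing three equally spaced points $a,\, a+h,\, a+2h \in \Omega$ together with the weights $(c_1,c_2,c_3)=(1,-2,1)$, which sum to zero, a direct computation gives $\sum_{i,j} c_i c_j |x_i-x_j|^{2H} = 2\,h^{2H}\,(2^{2H}-4)$, which is strictly positive precisely when $H>1$. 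This contradicts negative definiteness and establishes the converse.

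The genuinely analytic step is the passage to fractional powers in the sufficiency part: everything there rests on combining Schoenberg's theorem with the integral representation of $t\mapsto t^H$ and on checking that $1-e^{-sW_2^2}$ is negative definite for every $s>0$. The necessity, by contrast, is elementary once one notices that Dirac masses embed the problem back into the classical question of when $|x-y|^{\beta}$ is negative definite on the line, for which the three-point configuration above is the standard obstruction.
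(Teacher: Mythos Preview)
Your argument is correct and is in fact more complete than the paper's own proof. For the sufficiency, the paper establishes only the case $H=1$, by the same Hilbert-space embedding you use: it expands $W_2^2(\mu_i,\mu_j)$ via the quantile representation~(\ref{wassrdist}), kills the squared-norm terms using $\sum_i c_i=0$, and recognises the remainder as $-2\,\mathbb{E}\bigl(\sum_i c_i F_{\mu_i}^{-1}|_\Omega(V)\bigr)^2 \le 0$. It then simply asserts that ``Thus $W_2^{2H}$ is a negative definite kernel for $0<H\le 1$'' without justifying the passage to fractional powers. Your use of the integral representation $t^H = C_H\int_0^\infty (1-e^{-st})\,s^{-1-H}\,ds$, combined with Schoenberg's theorem to conclude that each $1-e^{-sW_2^2}$ is negative definite, fills exactly this gap; it is the standard Bernstein-function argument that the paper leaves implicit. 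As for the necessity, the paper does not address it at all: your reduction to Dirac masses and the three-point configuration $(1,-2,1)$ yielding $2h^{2H}(2^{2H}-4)>0$ for $H>1$ is the classical obstruction and supplies the converse that the paper states but never proves.
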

	
	\begin{proof}
		One can find in \cite{KolouriZouRohde} a version of this result for absolutely continuous distributions in $\mathcal{W}_2(\Omega)$. The proof of this proposition is proved for any distribution in $\mathcal{W}_2(\Omega)$.\\
		For any $\mu \in \mathcal{W}_2(\Omega)$, we denote by $F_\mu^{-1}|_\Omega $ the function defined by (\ref{quant2}).\\
		From statements in Section \ref{2.1}, we well known that given a uniform random variable $V$ on $\left[a,b\right]$, $F_\mu^{-1}|_\Omega(V)$ is a random variable with a law $\mu$, and furthermore for every $\mu$ and $\nu$ in $\mathcal{W}_2(\Omega)$,
		\begin{equation*}
		W_2(\mu,\nu)=\mathbb{E}(F_\mu^{-1}|_\Omega(V) - F_\nu^{-1}|_\Omega(V))^2,
		\end{equation*}
		that is to say the coupling of $\mu$ and $\nu$ given by the random vector $\left(F_\mu^{-1}|_\Omega(V), F_\nu^{-1}|_\Omega(V) \right)$ is optimal.\\
		Now let us consider every $\mu_1,\cdots,\mu_n \in \mathcal{W}_2(\Omega)$ and $c_1,\cdots,c_n\in \mathbb{R}$ such that $\sum_{i=1}^{n}c_i = 0$. We have
		\begin{align*}
		\sum_{i=1}^{n}c_ic_j W_2^2(\mu_i,\mu_j) &= \sum_{i,j=1}^{n}c_ic_j \mathbb{E}(F_{\mu_i}^{-1}|_\Omega(V) - F_{\mu_j}^{-1}|_\Omega(V))^2 \\
		{} &= \sum_{i,j=1}^{n}c_ic_j \mathbb{E} \left(F_{\mu_i}^{-1}|_\Omega(V) \right)^2 + \sum_{i,j=1}^{n}c_ic_j \mathbb{E} \left(F_{\mu_j}^{-1}|_\Omega(V) \right)^2 \\
		{}&- 2\sum_{i,j=1}^{n}c_ic_j \mathbb{E}\left( F_{\mu_i}^{-1}|_\Omega(V)F_{\mu_j}^{-1}|_\Omega(V)\right).
		\end{align*}
		Using $\sum_{i=1}^{n}c_i = 0$ the first two sums vanish and we obtain
		\begin{align*}
		\sum_{i=1}^{n}c_ic_j W_2^2(\mu_i,\mu_j) &= - 2\sum_{i,j=1}^{n}c_ic_j \mathbb{E}\left( F_{\mu_i}^{-1}|_\Omega(V)F_{\mu_j}^{-1}|_\Omega(V)\right)\\
		{} &= -2\mathbb{E}\left(\sum_{i=1}^{n}c_iF_{\mu_i}^{-1}|_\Omega(V)  \right)^2 \leq 0.
		\end{align*}
		Thus $W^{2H}_2$ is a negative definite kernel for $0<H\leq 1$.
	\end{proof}
	Now, we give the proof for Theorem \ref{exp.mono} by using results from both of Schoenberg theorem and proposition \ref{theoW}.
	\begin{proof}
		 The proof of Theorem \ref{exp.mono} follows immediately below from Theorem \ref{scho} and Proposition \ref{theoW}.  Applying Proposition \ref{theoW}, we deduce that $W_2^{2H}(\mu,\nu)$ with $H= 1$ is a negative definite kernel for all $\mu, \nu$ in $\mathcal{W}_2(\Omega)$.\\
		We can easily see that $e^{-\lambda x}$ with $\lambda$ positive is a completely monotone function. Let us then consider a mapping as follows:
		\begin{align*}
		F: \mathbb{R}^{+} &\rightarrow \mathbb{R}^{+}\\
		{} x &\mapsto \gamma^2 e^{-\lambda x},
		\end{align*}
		where $\gamma^2 >0,\ x = W_2^{2}(\mu,\nu)$ with $\lambda = \frac{1}{l},\ l>0$. Then $F$ is also a completely monotone function. From Theorem \ref{scho}, $k_\Theta$ is a positive definite kernel.
	\end{proof}


\section{Regression}\label{sec3}
\subsection{Setting}\label{3.1}
	In this section, we aim to define a regression function with distribution inputs.  The problem of distribution regression consists in estimating an unknown function $f: \mathcal{W}_2(\Omega) \rightarrow \mathbb{R}$ by using observations  $(\mu_i, y_i)$ in $ \mathcal{W}_2(\Omega) \times \mathbb{R}$ for all $i=1,\cdots,n$ . We recall observes in (\ref{equa1}) as follows
	\begin{equation}\label{model1}
	y_i = f(\mu_i) + \epsilon_i.
	\end{equation} 
	To provide a general form for functions defined on distributions, we will use the RKHS framework. Let $k_\Theta:\ \mathcal{W}_2(\Omega)\times \mathcal{W}_2(\Omega) \mapsto \mathbb{R}$ be defined in Theorem \ref{exp.mono}. For a fixed valid $\Theta$, we define a space $\mathcal{F}_0$ as follows:
	\begin{equation*}
	\mathcal{F}_0:=  {\rm span} \left\{k_\Theta(\bullet ,\mu) : \mu \in \mathcal{W}_2(\Omega) \right\}.
	\end{equation*}	
	And $\mathcal{F}_0$ is endowed with the inner product 
	\begin{equation*}
	\langle f_n,g_m \rangle_{\mathcal{F}_0} = \sum_{i=1}^{n} \sum_{j=1}^{m}\alpha_i \beta_j k_\Theta(\mu_i,\nu_j),
	\end{equation*}
	where $f_n(\bullet)=\sum_{i=1}^{n}\alpha_ik_\Theta(\bullet,\mu_i)$ and $g_m(\bullet)=\sum_{j=1}^{m}\beta_jk_\Theta(\bullet,\nu_j)$. The norm in $\mathcal{F}_0$ corresponds to the inner product,
	\begin{equation}
	 \|f_n \|_{\mathcal{F}_0}^2= \sum_{i=1}^{n} \alpha_i^2k_\Theta(\mu_i,\mu_i).
	\end{equation}

	Let $\mathcal{F}$ be the space of all continuous real-valued functions from $\mathcal{W}_2(\Omega)$ to $\mathbb{R}$ while  $\mathcal{F}_0$ consists of all functions in $\mathcal{F}$ which are uniform limits of functions of form $f_n$. The kernel $k_\Theta$ is said to be universal if it has a property that $\mathcal{F}_0=\mathcal{F}$. \\
	\indent 	From that for all $f,g$ belong in $\mathcal{F}$, the inner product is well defined as following formula
	\begin{equation}
	\langle f,g \rangle_{\mathcal{F}}: = \lim_{n\rightarrow \infty} \langle f_n,g_n \rangle_{\mathcal{F}_0}.
	\end{equation}
	Coming back to our problem, we want to estimate the unknown function $f$ by an estimation function $\hat{f}$ obtained by minimizing the regularized empirical risk over the RKHS $\mathcal{F}$. For this consider, we solve the solution of the minimization problem
	\begin{equation}\label{arg}
	\hat{f} = \underset{f \in \mathcal{F}}{\text{argmin}}  \left( \sum_{i=1}^{n}|y_i-f(\mu_i)|^2 + \lambda \norm{f}_\mathcal{F}^2 \right),
	\end{equation} 
	where $\lambda \in \mathbb{R}^{+}$ is the regularization parameter. Using the Representer theorem, this  leads  to the following expression for $ \hat{f}$,
	\begin{equation}\label{f.F}
	\hat{f}: \: \mu \mapsto \hat{f}(\mu) := \sum_{j=1}^{n} \hat{\alpha}_j k_\Theta(\mu,\mu_j) ,
	\end{equation}
	where $\left\{\hat{\alpha}_j \right\}_{j=1}^n$ are parameters typically obtained from training data.
	
\subsection{Universal kernel}

First, we recall the definition of a  universal kernel  and the main theorem to ensure universal properties of positive definite kernels in Theorem \ref{exp.mono}.
	 
\begin{defi}\label{uni}
	Let $C(X)$ be the space of continuous bounded functions on compact domain $X$. A continuous kernel $k$ on domain $X$ is called universal if the space of all functions induced by $k$ is dense in $C(X)$, i.e, for all $ f \in C(X)$ and every $\epsilon >0$ there exists a function $g$ induced by $k$ with $\norm{f-g}_\infty \leq \epsilon$.
\end{defi}
For more information on universal kernel and RKHS, we refer to Chapter 4 in \cite{villani2008optimal} and \cite{sriperumbudur2011universality}, \cite{micchelli2006universal}.\\

Now, we introduce a main contribution of our work which shows that kernels based on Wassertein distance proposed in \cite{bachoc2017gaussian} are universal, finally, we get what kernel we need for the setting \ref{3.1}.
	
\begin{thm}\label{th.principal}
	Let choose the parameter $\Theta$ in (\ref{kernel}) such that $\gamma \neq0,\ l>0$ and $H=1$. The kernel $k_\Theta: \mathcal{W}_2(\Omega)\times \mathcal{W}_2(\Omega) \rightarrow \mathbb{R}$ defined in (\ref{kernel}) is universal.
\end{thm}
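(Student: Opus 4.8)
The plan is to reduce the statement to the known universality of Gaussian-type RBF kernels on a Hilbert space, following exactly the strategy announced in the introduction (combining the construction of \cite{bachoc2017gaussian} with the result of \cite{christmann2010universal}). The mechanism that makes this possible is the fact that, in dimension one, optimal transport \emph{linearises} the Wasserstein distance through quantile functions: I would realise $k_\Theta$ as the pullback of a Gaussian RBF kernel living on a separable Hilbert space along a continuous injective embedding of $\mathcal{W}_2(\Omega)$, and then invoke the abstract universality theorem.

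First I would set up the isometric embedding. Define $\Phi:\ \mathcal{W}_2(\Omega)\rightarrow L^2([a,b])$ by $\Phi(\mu):=F_\mu^{-1}|_\Omega$, the restricted quantile function of (\ref{quant2}), where $L^2([a,b])$ carries the law of $V$. By the representation (\ref{wassrdist}) we have
\[
W_2^2(\mu,\nu)=\mathbb{E}\bigl(F_\mu^{-1}|_\Omega(V)-F_\nu^{-1}|_\Omega(V)\bigr)^2=\norm{\Phi(\mu)-\Phi(\nu)}_{L^2([a,b])}^2,
\]
so $\Phi$ is an isometry from $(\mathcal{W}_2(\Omega),W_2)$ into the separable Hilbert space $L^2([a,b])$. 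Being an isometry, $\Phi$ is $1$-Lipschitz, hence continuous, and injective, since $\Phi(\mu)=\Phi(\nu)$ forces $W_2(\mu,\nu)=0$, i.e.\ $\mu=\nu$. Taking $H=1$ in (\ref{kernel}) then rewrites the kernel as
\[
k_\Theta(\mu,\nu)=\gamma^2\exp\!\left(-\frac{W_2^2(\mu,\nu)}{l}\right)=\gamma^2\exp\!\left(-\frac{\norm{\Phi(\mu)-\Phi(\nu)}_{L^2([a,b])}^2}{l}\right),
\]
exhibiting $k_\Theta$ as a positive multiple of the Gaussian-type RBF kernel on $L^2([a,b])$ composed with $\Phi$.

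To conclude I would invoke the abstract result. Since $\Omega\subset\mathbb{R}$ is compact, $\mathcal{W}_2(\Omega)$ is a compact metric space (Section~\ref{2.1}); we thus have a continuous injective map $\Phi$ from a compact metric space into a separable Hilbert space, which is precisely the setting in which \cite{christmann2010universal} establishes universality of the induced Gaussian-type RBF kernel. The positive prefactor $\gamma^2$ merely rescales the functions in the span and does not affect density in $C(\mathcal{W}_2(\Omega))$, so $k_\Theta$ is universal. I expect the main obstacle to be the careful verification that the hypotheses of \cite{christmann2010universal} are met — that $\Phi$ is continuous and injective and that $\mathcal{W}_2(\Omega)$ is compact — the injectivity carrying the genuine distributional content that a measure on $\Omega$ is uniquely recovered from its quantile function. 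Once the embedding is shown to be isometric, both continuity and injectivity come for free, and the remainder is a direct application of the cited theorem.
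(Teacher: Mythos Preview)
Your proposal is correct and follows essentially the same route as the paper: embed $\mathcal{W}_2(\Omega)$ into $L^2([a,b])$ via the quantile map $\mu\mapsto F_\mu^{-1}|_\Omega$ and then invoke the Christmann--Steinwart universality theorem for Gaussian-type RBF kernels (Proposition~\ref{printheo}). The only difference is in how the hypotheses are checked: the paper routes continuity and injectivity of the embedding through Proposition~\ref{quan} (properties of individual quantile functions under regularity assumptions on $F_\mu|_\Omega$), whereas you obtain both directly and unconditionally from the isometry identity $W_2(\mu,\nu)=\norm{\Phi(\mu)-\Phi(\nu)}_{L^2}$, which is cleaner and avoids the extraneous assumptions that $F_\mu|_\Omega$ be continuous and strictly increasing.
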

 To prove this Theorem, we need the two following Proposition \ref{quan} and Proposition \ref{printheo}. We will give the proof of this result after the two of following statements. 	
	
\begin{proposition}\label{quan}
		Let an increasing function $F_\mu|_\Omega : \Omega \rightarrow [a,b]$ with $[a,b]\subset [0,1]$ be the distribution function restricted on a compact subset $\Omega$ of $\mathbb{R}$, $F^{-1}_\mu|_\Omega$ be defined by $F_\mu^{-1}|_\Omega(t)= inf\{u\in \Omega,F_\mu|_\Omega(u) \geq t \}$ for all $t\in [a,b]$. Then $F_\mu|_\Omega$ is continuous if and only if $F^{-1}_\mu|_\Omega$ is strictly increasing on $[\text{inf}\ \text{ran} F_\mu|_\Omega, \text{sup}\ \text{ran} F_\mu|_\Omega]$. $F_\mu|_\Omega$ is strictly increasing if and only if ${F^{-1}_\mu|}_\Omega$ is continuous on $ran{F_\mu}|_\Omega$, where $ran F_\mu|_\Omega:= \left\{F_\mu|_\Omega (x): x\in \Omega \right\}$ denotes the range of $F_\mu|_\Omega$.
\end{proposition}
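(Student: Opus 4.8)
The plan is to recognize this as the classical duality between a cumulative distribution function and its quantile (generalized inverse), where the entire statement is driven by a single dictionary: \emph{jumps of $F_\mu|_\Omega$ correspond to intervals of constancy of $F_\mu^{-1}|_\Omega$, and intervals of constancy of $F_\mu|_\Omega$ correspond to jumps of $F_\mu^{-1}|_\Omega$}. The basic tool I would set up first is the Galois-type switching relation $F_\mu^{-1}|_\Omega(t) \le u \iff t \le F_\mu|_\Omega(u)$, valid because $F_\mu|_\Omega$ is non-decreasing and right-continuous (inherited from the distribution function $F_\mu$), which in particular guarantees that the infimum in (\ref{quant2}) is attained, i.e.\ $F_\mu|_\Omega(F_\mu^{-1}|_\Omega(t)) \ge t$. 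Since both $F_\mu|_\Omega$ and $F_\mu^{-1}|_\Omega$ are monotone, every discontinuity is a jump, so ``continuous'' is equivalent to ``no jumps'' and ``strictly increasing'' is equivalent to ``no nondegenerate interval of constancy''; this reduces both equivalences to the dictionary above.

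For the first equivalence I would argue each direction by contraposition. If $F_\mu^{-1}|_\Omega$ fails to be strictly increasing on $[\inf\operatorname{ran}F_\mu|_\Omega, \sup\operatorname{ran}F_\mu|_\Omega]$, there are $t_1 < t_2$ with $F_\mu^{-1}|_\Omega(t_1) = F_\mu^{-1}|_\Omega(t_2) =: u_0$; then the two descriptions of $u_0$ as an infimum give $F_\mu|_\Omega(u_0^-) \le t_1 < t_2 \le F_\mu|_\Omega(u_0)$, exhibiting a jump and contradicting continuity. Conversely, if $F_\mu|_\Omega$ has a jump at $u_0$, then for every $t \in (F_\mu|_\Omega(u_0^-), F_\mu|_\Omega(u_0)]$ one checks directly from (\ref{quant2}) that $F_\mu^{-1}|_\Omega(t) = u_0$, so $F_\mu^{-1}|_\Omega$ is constant on a nondegenerate interval and hence not strictly increasing.

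For the second equivalence I would proceed dually, reading the same dictionary in the opposite order. If $F_\mu|_\Omega$ is not strictly increasing it is constant, say equal to $t_0$, on a nondegenerate interval $[u_1,u_2]$; then $F_\mu^{-1}|_\Omega(t_0) \le u_1$ while $\lim_{t\downarrow t_0} F_\mu^{-1}|_\Omega(t) \ge u_2 > u_1$, so $F_\mu^{-1}|_\Omega$ jumps at $t_0$ and is not continuous on $\operatorname{ran}F_\mu|_\Omega$. Conversely, a jump of $F_\mu^{-1}|_\Omega$ at some $t_0 \in \operatorname{ran}F_\mu|_\Omega$ produces a nondegenerate interval $(u_1,u_2)$ avoided by $F_\mu^{-1}|_\Omega$, on which one shows $F_\mu|_\Omega \equiv t_0$, contradicting strict monotonicity.

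The routine part is the jump/flatness dictionary; the genuine care goes into the \textbf{domain and one-sidedness bookkeeping}, which is where I expect the main obstacle to lie. Because $F_\mu|_\Omega$ is only defined on the compact set $\Omega$ with range inside $[a,b]\subset[0,1]$, I must verify that the sets $\{u\in\Omega : F_\mu|_\Omega(u)\ge t\}$ are nonempty and that their infima behave as claimed precisely on the intervals quoted in the statement, namely $[\inf\operatorname{ran}F_\mu|_\Omega, \sup\operatorname{ran}F_\mu|_\Omega]$ for the quantile and $\operatorname{ran}F_\mu|_\Omega$ for $F_\mu^{-1}|_\Omega$; this is exactly why the two equivalences are stated on these specific domains rather than on all of $[a,b]$. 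The second subtlety is that $F_\mu^{-1}|_\Omega$ is left-continuous by construction, so ``continuity of $F_\mu^{-1}|_\Omega$'' really means ``no jumps'', and I must handle the one-sided limits $F_\mu|_\Omega(u_0^-)$ and $\lim_{t\downarrow t_0}F_\mu^{-1}|_\Omega(t)$ together with the boundary value $t=\sup\operatorname{ran}F_\mu|_\Omega$ using the appropriate non-strict inequalities. Once these endpoint and one-sided-limit technicalities are pinned down, all four implications follow immediately from the switching relation.
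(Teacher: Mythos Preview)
Your proposal is correct and, for the first equivalence, proceeds exactly as the paper does: both directions are argued by contraposition, translating a jump of $F_\mu|_\Omega$ into a flat stretch of $F_\mu^{-1}|_\Omega$ and vice versa.

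The genuine difference is in the second equivalence. The paper does not argue it from first principles but imports a level-set lemma from \cite{embrechts2013note}, namely
\[
\bigl(F_\mu^{-1}|_\Omega(z-),\,F_\mu^{-1}|_\Omega(z+)\bigr)\ \subseteq\ \{x\in\Omega:\ F_\mu|_\Omega(x)=z\}\ \subseteq\ \bigl[F_\mu^{-1}|_\Omega(z-),\,F_\mu^{-1}|_\Omega(z+)\bigr],
\]
and reads both implications off this inclusion. Your route is instead to reuse the switching relation $F_\mu^{-1}|_\Omega(t)\le u \iff t\le F_\mu|_\Omega(u)$ directly, deriving the constancy of $F_\mu|_\Omega$ on the gap $(u_1,u_2)$ produced by a jump of $F_\mu^{-1}|_\Omega$ (and dually) without any external citation. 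What you gain is a fully self-contained argument governed by a single tool; what the paper's route gains is brevity once the Embrechts--Hofert lemma is granted. Either way the substance is identical: your switching relation is precisely what underlies that lemma, so you are effectively reproving the needed special case inline. Your explicit flagging of the one-sided-limit and endpoint bookkeeping (left-continuity of $F_\mu^{-1}|_\Omega$, the restricted domains $[\inf\operatorname{ran}F_\mu|_\Omega,\sup\operatorname{ran}F_\mu|_\Omega]$ and $\operatorname{ran}F_\mu|_\Omega$) is appropriate and matches the care the paper takes implicitly through its appeal to \cite{embrechts2013note}.
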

\begin{proof}
	For the proof of this proposition, we refer to \cite{embrechts2013note} for more theoretical details in the case of the quantile function from $\left[0,1\right]$ to $\mathbb{R}$. Here, we give the proof for the kind of function $F^{-1}_\mu|_\Omega$ from $\left[a,b\right] \in \left[0,1\right]$ to $\Omega$ be a compact subset of $\mathbb{R}$.
	
	First, we prove that $F_\mu|_\Omega$ is continuous $\Leftrightarrow\ $ $F_\mu^{-1}|_\Omega$ is strictly increasing on $[\text{inf}\ \text{ran} F_\mu|_\Omega, \text{sup}\ \text{ran} F_\mu|_\Omega]$.\\
	$\Rightarrow$) Assume that $F_\mu|_\Omega$ is discontinuous at $x_0 \in \Omega$, we need to prove that $F_\mu^{-1}|_\Omega$ is not strictly increasing on$[\text{inf}\ \text{ran} F_\mu|_\Omega, \text{sup}\ \text{ran} F_\mu|_\Omega]$.\\
	In fact, since $F_\mu|_\Omega$ is increasing, this implies that there exist
	\begin{align*}
	z_1 &:= F_\mu|_\Omega\left(x_0-\right):= \lim_{x \uparrow x_0} F_\mu|_\Omega(x)\\
	z_2 &:= F_\mu|_\Omega\left(x_0+\right):= \lim_{x \downarrow x_0} F_\mu|_\Omega(x),
	\end{align*}
	$z_1<z_2$ and $z_0:= F_\mu|_\Omega(x_0) \in \left[z_1,z_2\right]$ such that $z_1<z_0<z_2$.\\
	Now, there exists $z_3,z_4\in [a,b]$ such that either $z_1<z_3<z_4<z_0<z_2$ or $z_1<z_0<z_3<z_4<z_2$ and note that for all $z\in \left[z_3,z_4\right] \in [\text{inf}\ \text{ran} F_\mu|_\Omega, \text{sup}\ \text{ran} F_\mu|_\Omega]$, $z \not\in\text{ran} F_\mu|_\Omega $.\\
	From that, by definition of $F_\mu^{-1}|_\Omega$, it is a constant in $\left[z_3,z_4\right]$. Hence, $F_\mu^{-1}|_\Omega$ is not strictly increasing on $[\text{inf}\ \text{ran} F_\mu|_\Omega, \text{sup}\ \text{ran} F_\mu|_\Omega]$.\\
	$\Rightarrow$) Assume that $F_\mu^{-1}|_\Omega$ be not strictly increasing on $[\text{inf}\ \text{ran} F_\mu|_\Omega, \text{sup}\ \text{ran} F_\mu|_\Omega]$, we need to prove that $F_\mu|_\Omega$ is discontinuous.\\
	In fact, because $F_\mu^{-1}|_\Omega$ is not strictly increasing on $[\text{inf}\ \text{ran} F_\mu|_\Omega, \text{sup}\ \text{ran} F_\mu|_\Omega]$, there exists $\left[z_1,z_2\right]\in \Omega$ with $\text{inf}\ \text{ran} F_\mu|_\Omega<z_1<z_2<\text{sup}\ \text{ran} F_\mu|_\Omega$ such that $F_\mu^{-1}|_\Omega(z)=x,\ \forall z\in \left[z_1,z_2\right]$ and a $x\in \Omega$.\\
	By definition of $F_\mu^{-1}|_\Omega$, this implies that
	\begin{equation*}
	F_\mu|_\Omega \left(x-\epsilon\right) <z_1<z_2<F_\mu|_\Omega \left(x+\epsilon\right),
	\end{equation*}
	for all $\epsilon>0$.\\
	Letting $\epsilon \downarrow 0$, we obtain
	\begin{equation*}
	F_\mu|_\Omega (x-) \leq z_1<z_2\leq F_\mu|_\Omega \left(x+\right).
	\end{equation*}
	Thus $F_\mu|_\Omega$ is discontinuos at $x$.
	
	Now, we prove the second statement: $F_\mu|_\Omega$ is not strictly increasing $\Leftrightarrow \ $ $F_\mu^{-1}|_\Omega$ is discontinuous on $\text{ran} F_\mu|_\Omega$.\\

	To prove this statement, we need to use a result (7) in Proposition 2.3 in \cite{embrechts2013note}, we note this result by (*). Here, we recall briefly this result corresponding to our case as follows.
	\begin{equation*}
	\left(F_\mu^{-1}|_\Omega(z-),F_\mu^{-1}|_\Omega(z+) \right) \subseteq \left\lbrace x\in \Omega : F_\mu|_\Omega(x)=z \right\rbrace \subseteq \left[F_\mu^{-1}|_\Omega(z-),F_\mu^{-1}|_\Omega(z+)  \right],
	\end{equation*}
	where 
	\begin{align*}
	F_\mu^{-1}|_\Omega(z-) &= \lim_{t\uparrow z}F_\mu^{-1}|_\Omega(t),\\
	F_\mu^{-1}|_\Omega(z+) &= \lim_{t\downarrow z}F_\mu^{-1}|_\Omega(t).
	\end{align*}
	$\Rightarrow$) Now, assume that $F_\mu|_\Omega$ is not strictly increasing, we prove that $F_\mu^{-1}|_\Omega$ is discontinuous on $\text{ran} F_\mu|_\Omega$.\\
	In fact, let $A_z:= \left\lbrace x\in \Omega:\ F_\mu|_\Omega(x)=z \right\rbrace $.\\
	Because $F_\mu|_\Omega$ is not strictly increasing, there exists a $x\in \Omega$ such that $A_z$ contains an open interval.\\
	From the second statement of above result (*), we have 
	\begin{equation*}
	F_\mu^{-1}|_\Omega(z-) < F_\mu^{-1}|_\Omega(z+).
	\end{equation*}
	Thus $F_\mu^{-1}|_\Omega$ is discontinuous at $z \in \text{ran} F_\mu|_\Omega$.\\
	$\Leftarrow$) Assume $F_\mu^{-1}|_\Omega$ is discontinuous on $\text{ran} F_\mu|_\Omega$, we prove that $F_\mu|_\Omega$ is not strictly increasing.\\
	In fact, because $F_\mu^{-1}|_\Omega$ is discontinuous on $\text{ran} F_\mu|_\Omega$, there exists $z\in \text{ran} F_\mu|_\Omega$ such that $F_\mu^{-1}|_\Omega(z-) < F_\mu^{-1}|_\Omega(z+)$. Using the second statement in the result (*) that $A_z$ contains the open interval $\left(F_\mu^{-1}|_\Omega(z-),F_\mu^{-1}|_\Omega(z+) \right) $ thus $F_\mu|_\Omega$ is not strictly increasing.
\end{proof}

\begin{proposition}\label{printheo}
		Let $X$ be a compact metric space and $\mathcal{H}$ be a separable Hilbert space such that there exist a continuous and injective map: $\rho:\ X \rightarrow \mathcal{H}$. For $\gamma >0$, the Gaussian-type RBF-kernel $k_\gamma:\ X\times X\rightarrow \mathbb{R}$ is the universal kernel, where
		\begin{equation*}
		k_\sigma(x,x^{'}):= \exp(-\sigma^2\| \rho(x)-\rho(x^{'})\|^2_\mathcal{H}),\quad x,x^{'} \in X.
		\end{equation*}
\end{proposition}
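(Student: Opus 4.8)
The plan is to reduce the statement to the universality of the ordinary Gaussian kernel on a compact subset of the Hilbert space itself, and then to establish that via a Stone--Weierstrass argument carried out inside the reproducing kernel Hilbert space rather than on finite kernel sections. Throughout I write $\sigma$ for the parameter appearing in the displayed kernel.

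First I would use that $\rho$ is a continuous injection defined on the compact metric space $X$, hence a homeomorphism onto its image $Z:=\rho(X)$, a compact subset of $\mathcal{H}$. Setting $\tilde k_\sigma(z,z'):=\exp(-\sigma^2\|z-z'\|_{\mathcal H}^2)$ for $z,z'\in Z$, we have $k_\sigma=\tilde k_\sigma\circ(\rho\times\rho)$. The pullback $g\mapsto g\circ\rho$ is an isometric isomorphism of $(C(Z),\|\cdot\|_\infty)$ onto $(C(X),\|\cdot\|_\infty)$ which sends $\tilde k_\sigma(\cdot,\rho(x'))$ to $k_\sigma(\cdot,x')$, hence maps the sup-norm closure of the functions induced by $\tilde k_\sigma$ onto those induced by $k_\sigma$. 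By Definition~\ref{uni}, $k_\sigma$ is therefore universal on $X$ if and only if $\tilde k_\sigma$ is universal on $Z$, so it suffices to treat the latter. Next I would peel off the radial weight: since $e^{\sigma^2\|z\|^2}$ is continuous, strictly positive and bounded on the compact set $Z$, multiplication by it is a topological automorphism of $C(Z)$. From
$$\tilde k_\sigma(z,z') = e^{-\sigma^2\|z\|^2}\,e^{-\sigma^2\|z'\|^2}\,e^{2\sigma^2\langle z,z'\rangle_{\mathcal H}},$$
the span of the sections of $\tilde k_\sigma$ is $e^{-\sigma^2\|\cdot\|^2}$ times the span of the sections of the dot-product kernel $\kappa(z,z'):=e^{2\sigma^2\langle z,z'\rangle_{\mathcal H}}$, so $\tilde k_\sigma$ is universal on $Z$ if and only if $\kappa$ is. I would thus prove that $\kappa$ is universal on $Z$.

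The heart of the argument is to show that the RKHS $\mathcal{H}_\kappa$ already contains every cylindrical polynomial. Expanding $\kappa(z,z')=\sum_{n\ge0}\frac{(2\sigma^2)^n}{n!}\langle z,z'\rangle_{\mathcal H}^n$ exhibits $\kappa$ as the kernel of the symmetric-tensor (Fock) feature map $\Phi(z)=\big(\sqrt{(2\sigma^2)^n/n!}\;z^{\otimes n}\big)_{n\ge0}$, which is well defined on $Z$ because $\sum_n\frac{(2\sigma^2)^n}{n!}\|z\|^{2n}=e^{2\sigma^2\|z\|^2}<\infty$. Since every Taylor coefficient is strictly positive, choosing the feature vector supported purely at level $n$ and equal to a suitable multiple of $v^{\otimes n}$ produces the function $z\mapsto\langle v,z\rangle_{\mathcal H}^n$, which therefore belongs to $\mathcal{H}_\kappa$ for each $v\in\mathcal H$ and each $n\ge0$. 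Their linear combinations exhaust the cylindrical polynomials, and this family is a unital subalgebra of $C(Z)$ (constants come from $n=0$, and products of monomials are again monomials). It separates the points of $Z$: for $z_1\ne z_2$ the degree-one function $z\mapsto\langle z_1-z_2,z\rangle_{\mathcal H}$ distinguishes them since its values differ by $\|z_1-z_2\|^2>0$ — this is exactly where the injectivity of $\rho$ is used, as it guarantees distinct images. By the Stone--Weierstrass theorem this subalgebra is $\|\cdot\|_\infty$-dense in $C(Z)$; being contained in $\mathcal{H}_\kappa$, it forces $\mathcal{H}_\kappa$ to be sup-norm dense in $C(Z)$, i.e.\ $\kappa$ is universal. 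Unwinding the two reductions gives Proposition~\ref{printheo}.

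The step I expect to be the genuine obstacle is the one just carried out. The naive temptation is to apply Stone--Weierstrass directly to the finite combinations $\sum_i\alpha_i\kappa(\cdot,w_i)$, but with centres constrained to $Z$ these are not closed under multiplication (the product of two exponentials has centre $w_i+w_j$, which generally leaves $Z$), so that route stalls. The resolution is precisely to pass from finite kernel sections to the full RKHS and exploit its feature-space description: the positivity of all Taylor coefficients makes each homogeneous polynomial an \emph{honest} element of $\mathcal{H}_\kappa$. Returning to the sup norm is then harmless, because $|f(z)|\le\sqrt{\kappa(z,z)}\,\|f\|_{\mathcal{H}_\kappa}$ is uniformly controlled on the compact set $Z$, so RKHS-norm membership transfers to sup-norm approximation. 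The reductions in the first two paragraphs are routine, and combined with the universality of $\kappa$ they complete the proof.
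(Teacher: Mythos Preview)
Your proof is correct. The paper does not actually prove this proposition at all: its entire proof reads ``See a part iii) of Theorem 2.2 in \cite{christmann2010universal} for the proof.'' You have supplied a self-contained argument in lieu of that citation, and the route you take --- reduce to the image $Z=\rho(X)$ via the homeomorphism, strip the radial weight to pass to the exponential dot-product kernel, then use the Fock-space feature map together with Stone--Weierstrass on the algebra of polynomial functionals --- is essentially the standard proof underlying the Christmann--Steinwart result. One small wording issue: when you say ``products of monomials are again monomials'', what you need (and what actually holds, by polarization of symmetric tensors) is that products of functions $z\mapsto\langle v,z\rangle^n$ lie in the \emph{span} of such functions; a single product $\langle v_1,z\rangle^n\langle v_2,z\rangle^m$ is generally a linear combination of several $\langle v,z\rangle^{n+m}$, not one. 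This does not affect the validity of the Stone--Weierstrass step.
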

\begin{proof}
	See a part iii) of Theorem 2.2 in \cite{christmann2010universal} for the proof.
\end{proof}
Now, using results of the two above results from Proposition \ref{quan} and \ref{printheo}, we give a proof for Theorem  \ref{th.principal}.
\begin{proof}
		From Proposition \ref{quan} with the conditions including the distribution restricted on $\Omega$, $F_\mu|_\Omega$ be continuous  and $F_\mu|_\Omega$ be strictly increasing on $\Omega$, then there exists a continuous and injective map 
		\begin{align*}
		\rho: \mathcal{W}_2(\Omega) &\rightarrow \mathbb{L}_2[a,b]\\
		{} \mu &\mapsto \rho(\mu):=F^{-1}_\mu|_\Omega.
		\end{align*}
		$F^{-1}_\mu|_\Omega$ is continuous on $[a,b]$ and strictly increasing on $[\text{inf}\ \text{ran} F_\mu|_\Omega, \text{sup}\ \text{ran} F_\mu|_\Omega]$.\\
		We consider a Wasserstein space $\mathcal{W}_2(\Omega)$ metrized by the Wasserstein distance $W_2$ with $\Omega$ be a compact subset on $\mathbb{R}$  and $\mathbb{L}_2[a,b]$ be the usual space of square integrable functions on $[a,b]$. For $\sigma$ in Proposition \ref{printheo} is exactly defined by $1/\sqrt{l}$ for all $l>0$. We have
		\begin{equation*}
		k_\Theta(\mu,\nu) = \gamma^2 \exp\left\{-\frac{\|F^{-1}_\mu|_\Omega - F^{-1}_\nu|_\Omega\|^2_{\mathbb{L}_2[a,b]}}{l}\right\}
		\end{equation*}
		is the universal kernel from Proposition \ref{printheo}. We complete the proof of Theorem \ref{th.principal}.
\end{proof}
The minimization program in \eqref{arg} can be solved explicitely using the representer theorem of \cite{kimeldorf1971some}.  Note that Sch\"olkopf and Smola \cite{smola1998learning} give a simple proof of a more general version of the theorem.  Define $c_{ij}$ as follows
	\begin{equation*}
	c_{ij} = \gamma^2\exp \left(-\frac{W^2_2(\mu_i,\mu_j)}{l} \right)
	\end{equation*}
	and $\alpha = \left(\alpha_1,\cdots,\alpha_n \right)^T,\ Y=(y_1,\cdots,y_n)^T$.\\
	Now we take the matrix formulation of \eqref{arg} we obtain
	\begin{equation}\label{matrix.form}
	\underset{\alpha}{\text{min}}\ trace((Y-C\alpha)(Y-C\alpha)^{T}) + \lambda trace(C\alpha \alpha^T),
	\end{equation}
	where the operation trace is defined as
	\begin{equation*}
	trace (A) = \sum_{i=1}^{n}a_{ii}
	\end{equation*}
	with $A=(a_{ii})_{i=1}^n$.\\
	Taking the derivative of (\ref{matrix.form}) with respect to vector $\alpha$, we find that $\alpha$ satisfies the system of linear equations 
	\begin{equation}\label{linear}
	(C+\lambda I)\alpha = Y.
	\end{equation}
	Hence 
\begin{equation}\label{formest}
\hat{f}(\mu) = \sum_{j=1}^n \hat{\alpha}_j k_\Theta(\mu,\mu_j),
\end{equation}
with
\begin{equation}\label{lambda2}
\hat{\alpha}= (C+\lambda I)^{-1}Y.
\end{equation}

\section{Numerical Simulations and Real data application}\label{sec4}
First we consider the regression model with one-dimensional distribution input. For this, we  train the model and try to estimate the unknown function $f$ using our proposed methodology. Then we address the regression model but with higher-dimensional distributions entries. By using the same technique for generating data in the one-dimensional distribution, we estimate the unknown function $f$ in the case of 2-dimensional distributions. Finally, we apply our for a   real dataset from biological field.
\subsection{Simulation in the regression model with one-dimensional distribution input}
\subsubsection{Overview of the simulation procedure}
In this section, we investigate the regression model for predicting the regression function from  distributions. Particularly, we want to estimate the unknown function $f$ in model (\ref{model1}) by using the proposed estimation $\hat{f}$ in (\ref{formest}), so we need to present how we can optimize the parameters in this formula. We then compare the regression model based on RKHS induced by our universal kernel function to more classical kernel functions operating on projections of the probability measures on finite dimensional spaces. 
We address the input-output map given by
\begin{equation}\label{exactf}
f(\nu) = \frac{m_\nu}{0.05+\sigma_\nu},
\end{equation}
where $\nu$ is a Gaussian distribution of mean $m_\nu$ and variance $\sigma^2_\nu$. We consider the ground truth function $f$ that we compare with a predicted function $\hat{f}$, such as:
\begin{equation}
\hat{f}(\nu)=\gamma^2 \sum_{j=1}^{n}\hat{\alpha}_j \exp \left[-\frac{W^2_2(\nu,\mu_j)}{l}\right],
\end{equation}
where the Wasserstein distance between two Gaussian distribution is calculated using:
\begin{align*}
W_2^2(\mu,\nu) &= (m_\mu - m_\nu)^2 + (\sigma_\mu - \sigma_\nu)^2,
\end{align*}
where $\mu \sim \mathcal{N}(m_\mu,\sigma^2_\mu)$ and $\nu\sim \mathcal{N}(m_\nu,\sigma^2_\nu)$.\\
Each value $\hat{\alpha}_j$ is estimated using Eq.~\eqref{lambda2} which depends on parameter $\lambda>0$. Thus our proposed estimation function $\hat{f}$ depends totally on the three parameters $\lambda >0, \gamma \neq 0$ and $l>0$. To understand the effects of these parameters on $\hat{f}$, we define reference values of $\lambda, \gamma$ and $l$.
We then generate a training set including the normal distributions $\nu_i \sim \mathcal{N}(m_{\nu_i},\sigma_{\nu_i})$ such that $cor(\nu_i,\nu_j) \neq 0, \forall i,j = 1,\cdots n$, with $n$ be a size of training set. In this simulation, we take $n=200$.\\
From the training set $\{\left(\nu_i,f(\nu_i)\right)\}_{i=1}^{n=200}$, we fit three regression models which we call "Wasserstein", "Legendre" and "Histogram". "Wassertein" stands  for our model, "Legendre" corresponds to regression models with smooth density input obtained by projecting onto Legendre polynomial basis while the last "Histogram" is introduced for the rough histogram input, as detailed  later. Then we evaluate the quality of the two regression models on a test set of size $n_t$ of the form $\{(\nu_{t,i},f(\nu_{t,i}))\}_i^{n_t}$, where $\nu_{t,i}$ is generated in the same way as $\nu_i$ above. We consider the following quality criteria, that is the root mean square error (RMSE) to see the qualify of our regression model by
\begin{equation*}
RMSE^2(\hat{f},f) = \frac{1}{n_t} \sum_{i=1}^{n_t} \left[f(\nu_{t,i}) - \hat{f}(\nu_{t,i})\right]^2.
\end{equation*}

\subsubsection{Detail on the regression models}
We refer to our model as "Wasserstein" and introduce briefly "Legendre" and "Histogram" regression models. 

The "Wasserstein" model first propose the estimated function as follows
\begin{equation}
\hat{f}(\nu_{t,i}) = \gamma^2 \sum_{j=1}^{n}\hat{\alpha_j}\exp \left[-\frac{(m_{\nu_j} - m_{\nu_{t,i}})^2 + (\sigma_{\nu_j} - \sigma_{\nu_{t,i}})^2}{l} \right],
\end{equation}
where $\nu_{t,i}, i=1,\cdots,n_t$ belong to testing set with size $n_t$, and $\nu_j, j=1,\cdots,n$ belong to training set with size $n$. The estimated function $\hat{f}$ depends on three parameters $\gamma \neq 0,\ \lambda >0$ and $l>0$.

The "Legendre" model is based on kernel functions operating on finite dimensional linear projections of the distributions. For a Gaussian distribution $\mu\sim \mathcal{N}(m,\sigma^2)$ with density $f_\mu(t)=\frac{1}{\sqrt{2\pi}\sigma}\exp\left(-\frac{(t-m)^2}{2\sigma^2} \right)$ and support $[0,1]$, we compute for $i=0,\cdots, \theta -1$:
\begin{equation*}
a_i(\mu) = \int_{0}^{1}\frac{1}{\sqrt{2\pi}\sigma}\exp\left(-\frac{(t-m)^2}{2\sigma^2} \right) p_i(t)dt,
\end{equation*}
where $p_i$ is the $i$-th normalized Legendre polynomial, with $\int_{0}^{1} p_i^2(t)dt = 1$. The integer $\theta$ is called the order of the decomposition. Then $k_L$ operators on the vector $(a_0(\nu),\cdots,a_{\theta-1}(\nu)$ and is of the form
\begin{equation}
k_L(\nu_1,\nu_2) = \gamma^2\exp \left[-\sum_{i=0}^{\theta-1}\frac{|a_i(\nu_1)-a_i(\nu_2)|}{l_i} \right].
\end{equation}
Thus the estimated regression function $\hat{f}$ in this case is calculated by following function 
\begin{equation*}
\hat{f}(\nu_{t,i}) =\gamma^2 \sum_{j=1}^{n}\hat{\alpha}_j  \exp \left[-\sum_{i=0}^{\theta-1}\frac{|a_i(\nu_{t,i})-a_i(\nu_j)|}{l_i} \right].
\end{equation*}
We just consider the orders of the decomposition $5$ and $10$. We fix $l_i=l$ for all $i=1,\cdots,\theta -1$, this estimated function depends also on three parameters $\gamma \neq 0,\ \lambda>0$ and $l>0$.

The "Histogram" model is based on a Gaussian RBF kernel with $\chi_2$-distance between the histograms, see \cite{vedaldi2009multiple}. More particularly,
for $0< \zeta \leq 1$,

\begin{equation*}
K\left(\nu_1,\nu_2\right)=\exp \left(-\zeta \chi_2\left(h_1,h_2\right) \right),
\end{equation*}
where 
\begin{equation*}
\chi_2(h_1,h_2) = \sum_{r=1}^{M} \frac{\left(h_1(r)-h_2(r)\right)^2 }{h_1(r)+h_2(r)},
\end{equation*}
with $h_i$ is the histogram of the $i$th Gaussian distribution $\nu_i,\ ,\ i=\left\lbrace 1,2\right\rbrace $ and $M$ is the number of bins for all histograms $h_i$.\\
Thus, in this case, the estimated regression function is considered by the following expression
\begin{equation*}
\hat{f}\left(\nu_{t,i} \right) = \sum_{j=1}^{n} \hat{\alpha}_j \exp \left(-\zeta \sum_{r=1}^{M} \frac{\left(h_1(r)-h_2(r)\right)^2 }{h_1(r)+h_2(r)} \right).
\end{equation*}

\subsubsection{Result}
In the simulations, we observe the effects of parameters $\lambda>0,\ \gamma\neq 0$ and $l>0$ on RMSE between predicted function $\hat{f}(\nu_{i,t})$ and exact function $f(\nu_{i,t})$ through the testing set $\{(\nu_{i,t})\}_{i=1}^{n_t}$. We also take two sizes of testing set $n_t=500,n_t=700$ to see the changes of RMSE. We just show the detailed presentation about choosing the optimal parameters on the "Wasserstein" model.
\paragraph{\textbf{Case of testing set size $n_t=500$}}:
Now we consider RMSE in the case of $n_t=500$ under the different fixed values $\gamma=1/2, 1, 10$ and running $\lambda >0$ separated by 30 values from $0.005$ to $30$, $l >0$ separated by 25 values from $0.005$ to $20$. Let us see here the values of RMSE with the different cases of $\gamma$ in following Figure \ref{RMSEfixgam1.2}, \ref{RMSEfixgam1}, \ref{RMSEfixgam10}. 

\begin{center}
	\begin{figure}[!htb]
		\includegraphics[width=0.9\textwidth]{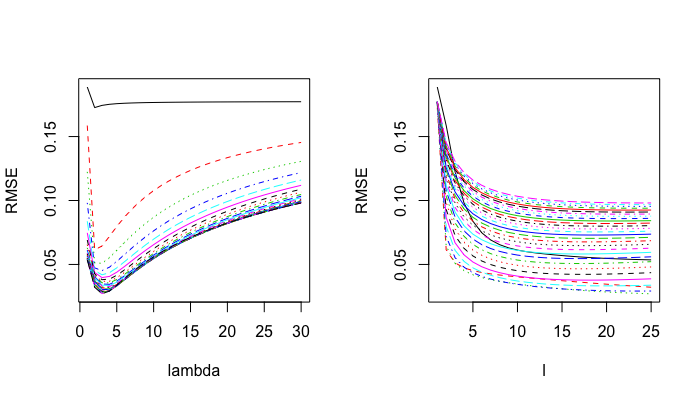}
		\caption{In the case of $n_t=500$, fixing a value $\gamma=1/2$, we run $\lambda >0$ separated by 30 values from $0.005$ to $30$, $l >0$ separated by 25 values from $0.005$ to $20$. We see the two graphs, one follows values of $\lambda$ in the left side and $l$ in the right side. RMSE will be minimized, in which it is lower than $0.08$, with $0<\lambda<15$ and $l$ big enough. We note that when $0<l<1$ RMSE is quite a big value for all $\lambda>0$, so we avoid to chose these values of $l$.}
		\label{RMSEfixgam1.2}
	\end{figure}
\end{center}

\begin{center}
	\begin{figure}[!htb]
		\includegraphics[width=0.9\textwidth]{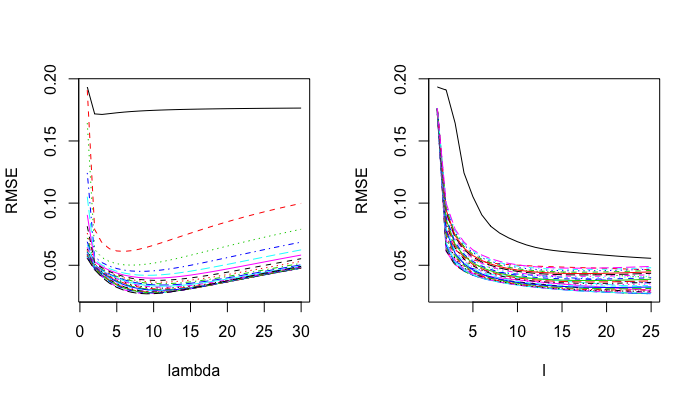}
		\caption{In the case of $n_t=500$, fixing a value $\gamma=1$, we run $\lambda >0$ separated by 30 values from $0.005$ to $30$, $l >0$ separated by 25 values from $0.005$ to $20$. We see the two graphs, one follows values of $\lambda$ in the left side and $l$ in the right side. The variations of RMSE in this case is not change significantly with the case of $\gamma=1/2$, however, it looks smaller than the case $\gamma=1/2$. We also see that RMSE will be minimized by two case: first $0<\lambda<1$ and $l$ big enough; second $\lambda >1$ for all $l>1$ and RMSE is quite big at $0<l<1$ for all $\lambda>0$.}
		\label{RMSEfixgam1}
	\end{figure}
\end{center}

\begin{center}
	\begin{figure}[!htb]
		\includegraphics[width=0.9\textwidth]{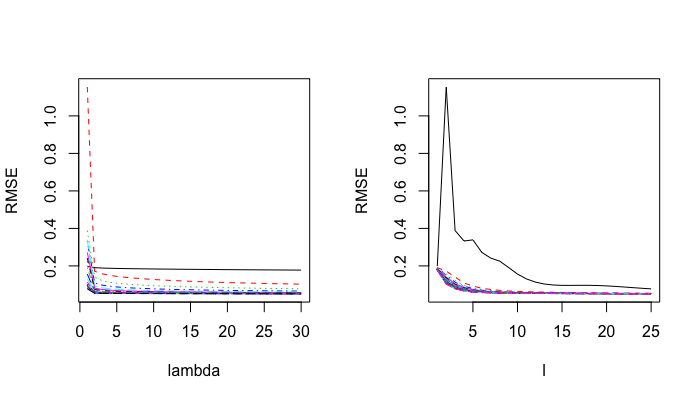}
		\caption{In the case of $n_t=500$, fixing a value $\gamma=10$, we run $\lambda >0$ separated by 30 values from $0.005$ to $30$, $l >0$ separated by 25 values from $0.005$ to $20$. RMSE in this case looks bigger than two above cases of $\gamma=1/2$ and $\gamma=1$.}
		\label{RMSEfixgam10}
	\end{figure}
\end{center}

We realize through three choices of $\gamma=1/2, 1, 10$ that the values $\gamma$ give the same impact of RMSE variations, but the smallest RMSE in the case $\gamma=1$. In following this stimulation, we fix the value of $\gamma=1$ and run the values of $\lambda$ and $l$ to see the changes of RMSE in the case of bigger size of testing set.

\paragraph{\textbf{Case of testing set size $n_t=700$}}:
Now we consider RMSE in the case of $n_t=700$ in Figure \ref{RMSEfixgam11size700} for a fixed value $\gamma=1$ and running $\lambda >0$ separated by 30 values from $0.005$ to $30$, $l >0$ separated by 25 values from $0.005$ to $20$. The parameters $\lambda$ and $l$ control the smoothness of the estimator. See the Figure \ref{optimalparameter}, to more clearly about our regression model with the exact function defined in (\ref{exactf}).

And finally, we consider the different RMSE's between "Wasserstein" and "Legendre" model by choosing the optimal values $\gamma=1$, $\lambda=5$ and $l=10$ under considering $n_t=700$. In Table \ref{tab1}, we show the values of RMSE quality criteria for the "Wasserstein" and "Legendre" distribution regression models. From the values of the RMSE criterion, the "Wasserstein" model clearly outperforms the other models. The RMSE of the "Legendre" models slightly decreases when the order increases, and stay well above the RMSE of the "Wasserstein" model.
\begin{center}
	\begin{figure}[!htb]
		\includegraphics[width=\textwidth]{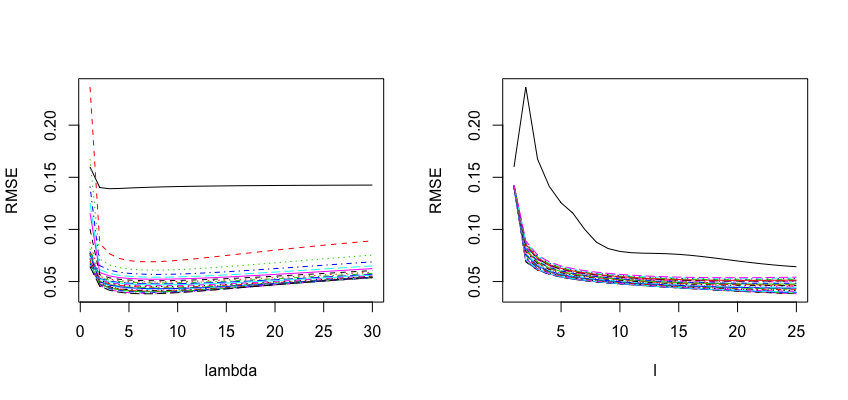}
		\caption{In the case of $n_t=700$, fixing a value $\gamma=1$, we run $\lambda >0$ separated by 30 values from $0.005$ to $30$, $l >0$ separated by 25 values from $0.005$ to $20$. RMSE is almost lower than $0.06$ when $\lambda >1$ and $l>1$. However for $0<\lambda<1$, we can also obtain the small RMSE when $l$ big enough. This figure provides a view about size of testing set, in which for the big enough of testing set size we will obtain the smaller RMSE under of the optimal parameters $\gamma,\ \lambda$ and $l$.}
		\label{RMSEfixgam11size700}
	\end{figure}
\end{center}

\begin{center}
	\begin{figure}[!htb]
		\includegraphics[width=\textwidth]{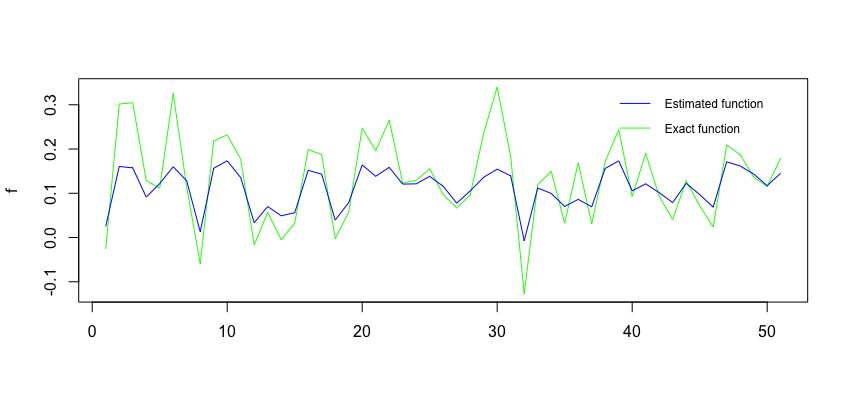}
		\caption{Regression function under exact and estimated function.
		The green line presents an exact function, which is many more variations, we desire to find the optimal parameters to obtain a more smooth curve. From Figure \ref{RMSEfixgam11size700}, we can chose these parameters following RMSE, however, in some cases it happens over-smoothing and under-smoothing. See in this Figure, the blue line looks like have some desirable properties when we chose the big enough values of $\lambda$ and $l$.}
		\label{optimalparameter}
	\end{figure}
\end{center}

\begin{table}[!htb]
	\begin{center}
		\begin{tabular}{||c | c||} 
			\hline
			model & RMSE \\ [0.5ex] 
			\hline 
			"Wasserstein" & $0.04$ \\[0.5ex] 
			\hline
			"Legendre" order 5 & $0.15$ \\[0.5ex] 
			\hline
			"Legendre" order 10 & $0.11$\\[0.5ex] 
			\hline
			"Histogram" & $0.12$\\[0.5ex] 
			\hline
		\end{tabular}
	\end{center}
	\caption{RMSE values of the quality criteria for the "Wasserstein", "Legendre" and "Histogram" distribution regression models. The "Wasserstein" is based on universal kernel function operating directly on the input Gaussian distributions, while "Legendre" is based on linear projections of the Gaussian distribution inputs on finite-dimensional spaces. For "Legendre", the order value is the dimension of the projection space. Beside that, the "Histogram" is producted by the rough histograms of Gaussian distributions. The quality criteria is the root mean square error (RMSE) should be minimal. The "Wasserstein" distribution regression model clearly outperforms the "Legendre" with two orders $5$, $10$ and the "Histogram". \label{tab1}} 
\end{table}
Hence from the Figure \ref{optimalparameter} and Table \ref{tab1}, we can see that by choosing the optimal parameters for $\gamma,\ l$ and $\lambda$ we can obtain a very well estimation function $\hat{f}$ without  under-smoothing or over-smoothing issues. Our regression model stay well above the RMSE criterion.

 Our interpretation for these results is that, because of the nature of the simulated data $(\mu_i,f(\mu_i))$ working directly on distributions and with the Wasserstein distance, is more appropriate than using linear projections. Indeed, in particular, two distributions with similar means and small variances are close to each other with respect to both Wasserstein distance and the value of the output function $f$. However,the probability density functions of the two distributions are very different from each other with respect to the $L^2$ distance in the case that the ratio between the two variances is large. Hence linear projections based on probability density functions is inappropriate in the setting considered here.
 
\subsection{Simulation in the regression model with two-dimensional distribution input}
In this section, we aim to study the two of estimated regression functions constructed by the Wasserstein distance and the Sliced Wasserstein distance in the case of 2D distribution entries. 
\subsubsection{Wasserstein distance and Sliced Wasserstein distance of 2D distributions}
For the Wasserstein distance between two 2-dimensional Gaussian distributions, we refer the construction of positive definite kernels by Hilbert space embedding of optimal transport maps in \cite{bachoc2018gaussian}. Firstly, we introduce briefly the notion of Wassertein barycenter. Let $\mathbb{P}$ be a distribution in $\mathcal{W}_2\left(\mathbb{W}_2\left(\mathbb{R}^p \right) \right)$, that is the set of measures on $\mathcal{W}_2\left(\mathbb{R}^p \right)$ with finite expected variances, and consider $\mu_1,\cdots,\mu_n$ i.i.d probabilities drawn according to the distribution $\mathbb{P}$. In this frame work, the Wasserstein distance between distribution on $\mathcal{W}_2\left(\mathbb{R}^p\right)$ is defined, for any $\nu\in \mathcal{W}_2\left(\mathbb{R}^p\right)$, as
\begin{equation*}
W^2_2\left(\mathbb{P},\delta_\nu\right) = \int W^2_2\left(\nu,\mu\right)d\mathbb{P}(\mu).
\end{equation*}
If $\tilde{\mu}$ is a random distribution obeying law $\mathbb{P}$, this corresponds to
\begin{equation*}
W_2^2\left(\mathbb{P},\delta_\nu \right) = \mathbb{E}_{\tilde{\mu} \sim \mathbb{P}} W^2_2\left(\tilde{\mu},\nu \right).
\end{equation*}
Note that we use the same notations for the Wassertein distances over distributions in $\mathcal{W}_2\left(\mathbb{R}^p\right)$ and over distributions on distributions in $\mathcal{W}_2\left(\mathbb{W}_2\left(\mathbb{R}^p \right) \right)$ because the space $\mathcal{W}\left(\mathbb{W}_2\left(\mathbb{R}^p \right) \right)$ inherits the properties of the space $\mathcal{W}_2\left(\mathbb{R}^p\right)$.

We define the Wassertein barycenter of $\mathbb{P}$ as a probability measure $\bar{\mu}$ in $\mathcal{W}_2\left(\mathbb{R}^p\right)$ such that
\begin{equation*}
\int W^2_2\left(\bar{\mu},\mu\right)d\mathbb{P}(\mu) = \inf \left\lbrace \int W^2_2\left(\nu,\mu\right)d\mathbb{P}(\mu), \nu \in \mathcal{W}_2\left(\mathbb{R}^p\right) \right\rbrace .
\end{equation*}
The following theorem from \cite{alvarez2015wide} guarantees the existence and uniqueness of this barycenter under some assumptions.
\begin{thm}
	Let $\mathbb{P}\in \mathcal{W}_2\left(\mathbb{W}_2\left(\mathbb{R}^p \right) \right)$. Assume that every distribution in the support of $\mathbb{P}$ is absolutely continuous with respect to Lebesgue measure on $\mathbb{R}^p$. Then there exists a unique distribution $\bar{\mu} \in \mathcal{W}_2\left(\mathbb{R}^p\right)$ defined as
	\begin{equation*}
	\bar{\mu} = \arg \min_{\nu \in \mathcal{W}_2\left(\mathbb{R}^p\right)} \left\lbrace \int W^2_2 \left(\nu,\mu \right)d\mathbb{P}(\mu)  \right\rbrace .
	\end{equation*}
\end{thm}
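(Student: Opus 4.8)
The plan is to prove the two assertions separately: existence by the direct method of the calculus of variations, and uniqueness by a strict convexity argument, the latter being where the absolute continuity hypothesis is essential. Throughout I write $V(\nu):=\int W_2^2(\nu,\mu)\,d\mathbb{P}(\mu)$ for the functional to be minimized and $m:=\inf_{\nu}V(\nu)$.

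For existence, I would first check that $V \not\equiv +\infty$: evaluating $V$ at a fixed reference measure and using that $\mathbb{P}\in\mathcal{W}_2(\mathcal{W}_2(\mathbb{R}^p))$ has finite expected second moment shows $m<\infty$. I would then take a minimizing sequence $(\nu_k)$ with $V(\nu_k)\le m+1$ and establish tightness through a uniform second-moment bound. The key estimate is the triangle inequality $\sqrt{\int|x|^2\,d\nu}=W_2(\delta_0,\nu)\le W_2(\delta_0,\mu)+W_2(\mu,\nu)$, which after integration against $\mathbb{P}$ and Cauchy--Schwarz yields $\sqrt{M_2(\nu)}\le C+\sqrt{V(\nu)}$, where $C$ depends only on the finite expected second moment of $\mathbb{P}$. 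Hence the $\nu_k$ have uniformly bounded second moments, so by Prokhorov the sequence is relatively compact for the narrow topology; extracting $\nu_k\to\bar\mu$, I would conclude by lower semicontinuity, since $\nu\mapsto W_2^2(\nu,\mu)$ is narrowly l.s.c. for each fixed $\mu$, so Fatou's lemma gives $V(\bar\mu)\le\liminf_k V(\nu_k)=m$, and the moment bound passes to $\bar\mu$, so $\bar\mu\in\mathcal{W}_2(\mathbb{R}^p)$ is a minimizer.

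For uniqueness I would invoke the displacement convexity theory of optimal transport. The barycenter functional $V$ is convex along generalized geodesics in $\mathcal{W}_2(\mathbb{R}^p)$, and the absolute continuity of $\mathbb{P}$-almost every reference measure upgrades this to strict convexity: by Brenier's theorem the optimal transport maps onto, and off of, such measures exist and are unique, which rules out the degeneracy responsible for mere (non-strict) convexity. Concretely, if $\mu^0\neq\mu^1$ were two minimizers, one first argues that any minimizer is itself absolutely continuous (a regularity consequence of the reference measures being absolutely continuous), and then, along the generalized geodesic $(\mu^t)$ connecting them, strict convexity of $t\mapsto V(\mu^t)$ forces $V(\mu^{1/2})<\tfrac12 V(\mu^0)+\tfrac12 V(\mu^1)=m$, contradicting minimality. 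Thus $\mu^0=\mu^1$.

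The main obstacle is precisely this uniqueness step. Existence is a routine compactness-plus-lower-semicontinuity argument, but uniqueness rests on the correct displacement-convexity structure of the Wasserstein distance: the squared distance $W_2^2(\cdot,\mu)$ is \emph{not} convex along ordinary McCann geodesics, so one must work with generalized geodesics and keep careful track of base points, and it is exactly here that the hypothesis ``every measure in the support of $\mathbb{P}$ is absolutely continuous'' does the essential work, guaranteeing via Brenier's theorem the existence, uniqueness and Lipschitz-type structure of the optimal maps that make $V$ strictly convex. Without it the optimal maps may fail to exist and strict convexity, hence uniqueness, can break down, as is classical in the Agueh--Carlier theory; an alternative route would be the fixed-point characterization of \cite{alvarez2015wide}, identifying $\bar\mu$ as the unique fixed point of the map that pushes forward by the $\mathbb{P}$-averaged optimal transport maps.
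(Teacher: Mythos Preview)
Your proof sketch is essentially sound and follows the classical Agueh--Carlier strategy (existence by the direct method, uniqueness via strict convexity along generalized geodesics with base point in an absolutely continuous measure), supplemented by the regularity and fixed-point ideas of \cite{alvarez2015wide}. The existence argument is routine; for uniqueness you have correctly identified both the right convexity structure and the precise role of the absolute continuity hypothesis, and you rightly flag the regularity step (that any minimizer inherits absolute continuity) as the point requiring care.

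However, you should be aware that the paper does \emph{not} prove this theorem at all: it is quoted verbatim as a result from \cite{alvarez2015wide} and used as a black box to justify the existence and uniqueness of the Wasserstein barycenter needed for the kernel construction in the two-dimensional simulations. There is therefore no ``paper's own proof'' to compare against. Your sketch goes well beyond what the paper offers and is a reasonable outline of how the cited result is actually established in the literature; if anything, you might simply defer to \cite{alvarez2015wide} as the paper does, since the theorem is auxiliary to the paper's contribution rather than part of it.
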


Considering a reference distribution $\bar{\mu} \in \mathcal{W}_2\left(\mathbb{R}^p\right)$, for $\mu \in \mathcal{W}_2\left(\mathbb{R}^p\right)$, let $T_\mu : \mathbb{R}^p \rightarrow \mathbb{R}^p$ we refer the optimal transportation maps defined by
\begin{equation*}
T_{\mu \sharp}\mu = \bar{\mu},
\end{equation*}
where $f_{\sharp}\pi= \pi \circ f^{-1}$ is the push-forward measure of a function $f$ from a measure $\pi$, and
\begin{equation*}
\norm{id - T_\mu}_{L^2(\mu)} = W_2\left(\mu, \bar{\mu} \right).
\end{equation*}
Note that the map $T_\mu$ is uniquely defined when $\mu$ is absolutely continuous w.r.t Lebesgue measure. Futhermore, $T_\mu$ is invertible from the support of $\mu$ to the support of $\bar{\mu}$ if also $\bar{\mu}$ is absolutely continuous. 

By associating the transport map $T_\mu^{-1}$ to each distribution $\mu$ and using positive definite kernel on the Hilbert $L^2\left(\bar{\mu} \right)$, containing these transport maps, the groups of authors in \cite{bachoc2018gaussian} introduce the following construction of positive definite kernels by Hilbert space embedding of optimal transport maps.
\begin{proposition}
	Consider a function $F: \mathbb{R}^{+} \rightarrow \mathbb{R}$ such that, for any Hilbert space $H$ with norm $\norm{.}_H$, the function $h_1,h_2 \rightarrow F\left(\norm{h_1-h_2}_H \right)$ is positive definite on $H$. Let $\bar{\mu}$ be a continuous distribution in $\mathcal{W}_2\left(\mathbb{R}^p\right)$. Consider the function $K$ on the set continuous distributions in $\mathcal{W}_2\left(\mathbb{R}^p\right)$ defined by
	\begin{equation} \label{ker2D}
	K(\mu,\nu)=F\left( \norm{T^{-1}_\mu - T^{-1}_\nu}_{L^2\left(\bar{\mu}\right)} \right).
	\end{equation}
	Then K is positive definite. 
\end{proposition}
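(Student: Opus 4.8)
The plan is to recognize $K$ as the pullback, through a feature map into a Hilbert space, of a kernel that is positive definite directly by the hypothesis on $F$. Concretely, I would set $H := L^2(\bar{\mu})$ and introduce the feature map
\begin{equation*}
\Phi: \mu \mapsto T_\mu^{-1} \in H,
\end{equation*}
defined on the set of continuous distributions in $\mathcal{W}_2(\mathbb{R}^p)$. With this notation, $K(\mu,\nu)=F\left(\norm{\Phi(\mu)-\Phi(\nu)}_H\right)$, so the proof reduces to combining two ingredients: (i) that the kernel $k_H(h_1,h_2):=F\left(\norm{h_1-h_2}_H\right)$ is positive definite on $H$, and (ii) that positive definiteness is preserved under composition with an arbitrary map into $H$.

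First I would check that $\Phi$ is well defined, i.e.\ that $T_\mu^{-1}$ is a genuine element of $L^2(\bar{\mu})$. Since $\bar{\mu}$ is continuous (absolutely continuous with respect to Lebesgue measure) and each $\mu$ in the domain is continuous, the discussion preceding the statement guarantees that $T_\mu$ is uniquely defined and invertible from the support of $\mu$ onto that of $\bar{\mu}$, so $T_\mu^{-1}$ makes sense pointwise $\bar{\mu}$-almost everywhere. For integrability, I would use that $T_\mu^{-1}$ is itself the optimal transport map from $\bar{\mu}$ to $\mu$, whence
\begin{equation*}
\norm{\mathrm{id}-T_\mu^{-1}}_{L^2(\bar{\mu})} = W_2(\bar{\mu},\mu) < \infty;
\end{equation*}
because $\mathrm{id}\in L^2(\bar{\mu})$ (as $\bar{\mu}$ has a finite second moment), the triangle inequality yields $T_\mu^{-1}\in L^2(\bar{\mu})$, so indeed $\Phi(\mu)\in H$.

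For ingredient (i) I would simply invoke the hypothesis on $F$ specialized to the particular Hilbert space $H=L^2(\bar{\mu})$, which gives that $k_H$ is positive definite on $H$. For ingredient (ii) I would verify the elementary pullback fact directly: for any continuous distributions $\mu_1,\dots,\mu_n$ and scalars $c_1,\dots,c_n\in\mathbb{R}$,
\begin{equation*}
\sum_{i,j=1}^{n} c_i c_j\, K(\mu_i,\mu_j) = \sum_{i,j=1}^{n} c_i c_j\, k_H\!\left(\Phi(\mu_i),\Phi(\mu_j)\right) \geq 0,
\end{equation*}
since $\Phi(\mu_1),\dots,\Phi(\mu_n)$ are points of $H$ and $k_H$ is positive definite there. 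This establishes that $K$ is positive definite. The argument is essentially immediate once the feature-map viewpoint is adopted; the only genuine point requiring care is the well-definedness in the second step — that the inverse transport map is a bona fide square-integrable function against $\bar{\mu}$ — which is precisely why the statement restricts to continuous distributions and assumes $\bar{\mu}$ continuous, and is the sole place where the hypotheses are used beyond the defining property of $F$.
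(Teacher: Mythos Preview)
Your proposal is correct. The paper does not supply a detailed proof of this proposition (it is quoted from \cite{bachoc2018gaussian}), but the sentence introducing it --- ``By associating the transport map $T_\mu^{-1}$ to each distribution $\mu$ and using positive definite kernel on the Hilbert $L^2(\bar{\mu})$, containing these transport maps'' --- is precisely your feature-map/pullback argument, so your approach coincides with the one the paper indicates; your additional check that $T_\mu^{-1}\in L^2(\bar{\mu})$ via $\norm{\mathrm{id}-T_\mu^{-1}}_{L^2(\bar{\mu})}=W_2(\bar{\mu},\mu)$ fills in the only nontrivial detail left implicit there.
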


Note that in the particular case of Gaussian distributions, the transport map is linear. Thus the kernel (\ref{ker2D}) based on the optimal transport map of Gaussian distributions is guaranteed universality. Using this result, we make later the link to our theory in building an estimated function for the unknown function $f$ in our model corresponding to two-dimensional Gaussian distribution entries. In the general dimension of Gaussian distributions, we refer to the section 4.3 in \cite{bachoc2018gaussian} for more details. 

We introduce now the kernel obtained by the Sliced Wasserstein distance, the one built in \cite{KolouriZouRohde, kolouri2018sliced}. The principal idea of this distance is to first obtain a family of one-dimensional probability distributions by using the Radon transforms. Then we calculate the distance between two multidimensional distributions as the Wasserstein distance of theirs one-dimensional representations. \\
Let $\nu$ and $\mu$ be two continuous probability measures on $\mathbb{R}^d$ with corresponding the positive probability density function $I_\nu$ and $I_\mu$, the Sliced Wasserstein distance is defined by
\begin{equation*}
K(\mu,\nu)=\exp \left(-\xi SW^2\left(\mu,\nu \right) \right),
\end{equation*}
where $0<\xi\leq 1$ and 
\begin{equation*}
SW^2\left(\mu,\nu \right) = \int_{\mathbb{S}^{d-1}}^{} W^2_2\left(\mathcal{R}I_\mu \left(.,\theta\right), \mathcal{R}I_\nu \left(.,\theta\right) \right ) d\theta ,
\end{equation*}
with $\mathcal{R}$ is the Radon transform and $\mathbb{S}^{d-1}$ is the unit sphere in $\mathbb{R}^d$.

\subsubsection{Detail on the simulation procedure}
We describe clearly our simulation in this section. We address the input-output map given by
\begin{equation*}
f(\nu)=\frac{\|m_\nu\|_2}{0.05+\|\Sigma_\nu\|_F},
\end{equation*}
where $\nu$ is a two-dimensional Gaussian distribution of mean $m_\nu$ and non-degenerated covariance $\Sigma_\nu$, $\|.\|_F$ is the Frobenious norm.\\
We consider the predicted function $\hat{f}$ as follows
\begin{equation*}
\hat{f}\left(\nu\right) =  \sum_{j=1}^{n}\hat{\alpha}_j K\left(\nu,\mu_j \right),
\end{equation*}
and we aim to compare the qualify of our estimated function $\hat{f}$ corresponding to the Wasserstein distance and the one corresponds to the Sliced Wasserstein distance. 

Particularly, the Wasserstein distance between two two-dimensional Gaussian distributions is defined by
\begin{equation*}
W_2^2\left(\mathcal{N}_2\left(m_1,\Sigma_1\right),\mathcal{N}_2\left(m_2,\Sigma_2\right)  \right) = \|m_1-m_2\|^2_2 + \|\Sigma_1^{1/2}-\Sigma_2^{1/2}\|^2_F.
\end{equation*}
From that, for $\mu = \mathcal{N}_2\left(m_\mu,\Sigma_\mu \right)$ and $\nu = \mathcal{N}_2\left(m_\nu,\Sigma_\nu \right)$ we consider the kernel $K$ such as
\begin{equation*}
K(\mu,\nu)=\gamma^2\exp \left(-\frac{\|m_\mu-m_\nu\|^2_2 + \|\Sigma_\mu^{1/2}-\Sigma_\nu^{1/2}\|^2_F}{l} \right),
\end{equation*}
where $\gamma \neq 0$ and $l>0$. We call this model by "Wasserstein2D".

For the Sliced Wasserstein distance, let $\mu=\mathcal{N}_2\left(m_\mu,\Sigma_\mu\right)$, a slice/projection of the Radon transform of $\mu$ is then a one-dimensional Gaussian distribution $\mathcal{R}\mu = \mathcal{N}\left(\theta m_\mu,\theta^T\Sigma_\mu \theta \right)$, with $\theta \in \mathbb{S}^1$. Then we use the Wasserstein distance for two of one-dimensional Gaussian distributions. We are interested in the kernel defined in \cite{KolouriZouRohde}, precisely
\begin{equation}
K(\mu,\nu)=\exp \left(-\xi \left(  \norm{\theta m_\nu-\theta m_\mu}_2^2 + \norm{\left(\theta^T\Sigma_\nu \theta\right)^{1/2} -\left(\theta^T\Sigma_\mu \theta\right)^{1/2 }}_F^2 \right)  \right),
\end{equation}
where $\xi>0$. We denote this model by "SlicedWasserstein2D".

Use the same technique in generating a testing and training set the one-dimensional Gaussian distribution cases. We consider the optimal estimates of both cases of "Wasserstein2D" and "SlicedWasserstein2D" regression models. More precisely, we optimize parameters $\lambda>0,\ \gamma \neq 0$ and $l>0$ in "Wasserstein2D" model and $\theta \in \mathbb{S}^1$ such as $\theta=\left(\cos \left(\frac{c\pi}{50} \right), \sin \left(\frac{c\pi}{50} \right) \right)$ with $c\in \left\lbrace 0,1,\cdots,50 \right\rbrace $, $\xi >0,\ \lambda >0$ for "SlicedWasserstein2D" model. We consider the RMSE criteria for these models following the level of noise $\sigma = 1,5,10$ in the Table \ref{tabb}. \\
\begin{table}[!htb]
	\begin{center}
		\begin{tabular}{||c|c | c||} 
			\hline
			$\sigma$&"Wasserstein2D" & "SlicedWasserstein2D" \\ [0.5ex] 
			\hline
			$\sigma=1$ &$0.06$&$0.06$\\ [0.5ex] 
			\hline
			$\sigma=5$ &$0.09$&$0.33$\\ [0.5ex] 
			\hline
			$\sigma=10$ &$0.1$&$0.91$\\ [0.5ex] 
			\hline
		\end{tabular}
	\end{center}
	\caption{RMSE values of the quality criteria for the "Wassertein2D" and "SlicedWassrstein2D" regression models following the level of noise $\sigma=1,5,10$ with two-dimensional distribution entries. Basing on the minimal RMSE values, the "Wasserstein2D" model clearly outperforms the "SlicedWassertein2D" model when there exists much noise in the regression model. \label{tabb}} 
\end{table}
Hence from the Table \ref{tabb}, we can see that by choosing the optimal parameters for $\gamma, l, \lambda$ and $\theta$ for obtaining the RMSE of both regression models, in the low noise level, the RMSE of both models are the same as quite good, but the "Wasserstein2D" model looks better than "SlicedWassertein2D" model when the noise is increased. 

\subsection{Application on evolution of hearing sensitivity}
An otoacoustic emission (OAE) is a sound which is generated from within the inner ear. OAEs can be measured with a sensitive microphone in the ear canal and provide a noninvasive measure of cochlear amplification (see Chapter: Hearing basics in \cite{eggermont2017hearing}). Recording of OAEs has become the main method for newborn and infant hearing screening (see Chapter: Early Diagnosis and Prevention of Hearing Loss in \cite{eggermont2017hearing}). There are two types of OAEs: spontaneous otoacoustic emissions (SOAEs), which can occur without external stimulation, and evoked otoacoustic emissions (EOAEs), which require an evoking stimulus. In this paper, we consider a type of EOAEs that is Transient-EOAE (TEOAE) (see for instance in \cite{joris2011frequency}), in which the evoked response from a click covers the frequency range up to around 4kHz. More precisely, each TEOAE models the ability of the cochlea to response to some frequencies in order to transform a sound into an information that will be processed by the brain.  So to each observation is associated a curve (the Oto-Emission curve) which describes the response of the cochlea at several frequencies to a sound. The level of response depends on each individual and each stimulus should be normalized, but the way each individual reacts is characteristic of its physiological characteristic. Hence to each individual is associated a curve, which after normalization, it is considered as a distribution $\mu$ describing the repartition of the responses for different frequencies ranging from 0 to 10 kHz. These distributions are shown in Figure~\ref{curveexample} and Table \ref{table:data}.

\begin{table}[h!]
	\centering
	\scalebox{0.9}{
\begin{tabular}{|p{2cm}|c| c|c| c|c|c|c|c|c|c|}
		\hline 
		\textbf{Name} & \textbf{Age} & \textbf{0(Hz)} & \textbf{39.06} & \textbf{78. 12}& $\cdots$ &\textbf{1171.88}&\textbf{1210.94}&$\cdots$& \textbf{9765.62} &\textbf{9804.69}\\
		\hline
		ABBAS & 23 & 0 & 0.0006 &0.0013 &$\cdots$ &0.0819&0.0388&$\cdots$ &0.0021&0.0015 \\
		ADAMS & 27 &0.0001 & 0.0010 &0.0022 &$\cdots$&0.0283&0.0283&$\cdots$ &0.0011&0.0006 \\
		ADENIYI & 30 & 0.0002 &0.0003 &0.0014&$\cdots$&0.0231&0.0065&$\cdots$&0.0012&0.0016 \\
		DUPLOOY &17&0.0003&0.0005&0.0015&$\cdots$&0.0786&0.1272&$\cdots$ &0.0036 &0.0031\\
		\vdots&\vdots&\vdots&\vdots&\vdots&\vdots&\vdots&\vdots&\vdots&\vdots&\vdots\\
		TRIMM&20&0.0005&0.0006&0.0026&$\cdots$ &0.0133&0.0215&$\cdots$ &0.0002&0.0017\\
		VELE&26&0.0001&0.0005&0.0018&$\cdots$ &0.1176&0.0859&$\cdots$ &0.0003&0.0005\\
		WALLER&40&0.0001&0.0001&0.0003&$\cdots$ &0.0178&0.0210&$\cdots$ &0.0014&0.0013\\
		WILLIAM &22&0.0002&0.0003&0.0009&$\cdots$ &0.0156&0.0656&$\cdots$ &0.0014&0.0018\\
		\hline
\end{tabular}}
\caption{TEOAE data. 48 individuals are considered in human population, recorded in South Africa, with last names in first column, their exact ages in second column and the others describe the responses of the cochlea at several frequencies ranging from 0Hz to 10kHz.}
\label{table:data}
\end{table}

\begin{center}
	\begin{figure}
		\includegraphics[width=\textwidth]{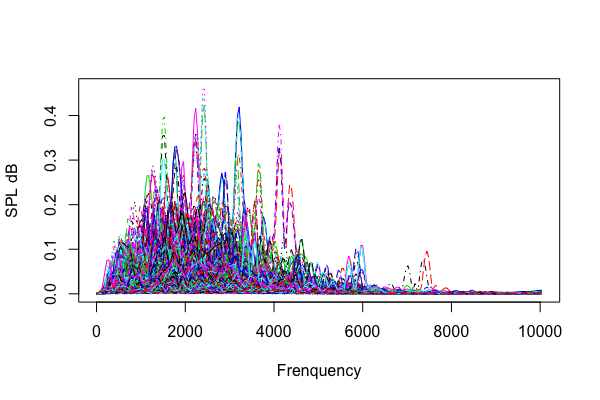}
		\caption{Oto-emission curves. 48 TEOAE curves following to frequencies ranging from 0Hz to 10kHz.}
		\label{curveexample}
	\end{figure}
\end{center}
The relationship between age and hearing sensitivity is investigated in \cite{o1994age, collet1990age} The results show that when age increases, the presence of EOAEs by age group and the frequency peak in spectral analysis decreases and EOAE threshold increases. The differences in EOAE have been also reported between age classes in humans. These results convey the idea that the response evolves with age and that the effect of ages in hearing issues is deeply related to  the changes of the cochlear properties. Hence our model uses as input these distributions $\mu$ and try to build a regression model to link between the age and these distributions representing the response of the cochlea at frequencies ranging from 0Hz to 10kHz. More precisely, we estimate the age for each level of response normalized and treated as a distribution $\mu$ by using our proposed function as follows

%

\begin{equation}\label{hatfest}
	\hat{f}(\mu_i) = \gamma^2 \sum_{j=1}^{n}\hat{\alpha}_j \exp \left( - \frac{\int_0^1 (F^{-1}_{\mu_i} (t) - F^{-1}_{\mu_j} (t))^2 dt}{l}\right),
\end{equation}
where $F^{-1}_\mu (U)$ defined as (\ref{quant2}) and the value of $\hat{\alpha}_j$ is chosen by optimal parameter $\lambda$ in (\ref{lambda2}). We estimate the integral in (\ref{hatfest}) by following formula 
\begin{equation}\label{experimentdist}
\int_0^1 (F^{-1}_{\mu_i} (t) - F^{-1}_{\mu_j} (t))^2 dt =\sum_{m=1}^{M}\left[F^{-1}_{\mu_i} \left(\frac{m}{M}\right) - F^{-1}_{\mu_j} \left(\frac{m}{M}\right)\right]^2,
\end{equation}
where we can understand each $F_{\mu_i}$ is an experimental distribution function of $\mu_i$ and $M$ is the number of discretized frequencies. As far as we know, each individual is associated with a curve, which after normalization without lost relationship among original data, it is considered as a distribution $\mu_i$. To calculate $F^{-1}_{\mu_i} \left(\frac{m}{M}\right)$, we arrange each curve in ascending order, for instance we denote $X_\mu (1)\leq X_\mu (2)\leq \cdots \leq X_\mu (M)$  following to distribution $\mu$ and $\sum_{m=1}^{M}X_\mu(m) = 1$ , so $F^{-1}_{\mu_i} \left(\frac{m}{M}\right) = X_{\mu_i}(m)$. Hence, we write again the formula (\ref{experimentdist}) 
\begin{equation}
\int_0^1 (F^{-1}_{\mu_i} (t) - F^{-1}_{\mu_j} (t))^2 dt  = \sum_{m=1}^{M}\left(X_{\mu_i}(m) - X_{\mu_j}(m)\right)^2,
\end{equation}
where $X_{\mu_i}$ is a curve $X$ following to distribution $\mu_i$.\\
In our simulation, we choose $\gamma=1$, the value of $l =10$  and the value of $\lambda>0$. We aim to study the age in relation with its TEOAE curve of 48 subjects, recorded on human population in South Africa, with the range of frequency from 0Hz to 10kHz. See the Figure \ref{age} to show the differences between the age of 15 to 50 years old. Following the estimated function in (\ref{hatfest}), we take 47 distributions $\{\mu_j\}_{j=1}^{47}$ for training set to calculate estimation value of $\hat{\alpha}_j$ and try to estimate real age of a remaining individual $\mu_i$ with $i\neq j$.  And the results are showed clearly in the Figure \ref{realetpredict} and Figure \ref{fig:realandpredictedage} about the exact age and predicted age.

\begin{center}
	\begin{figure}
		\includegraphics[width=\textwidth]{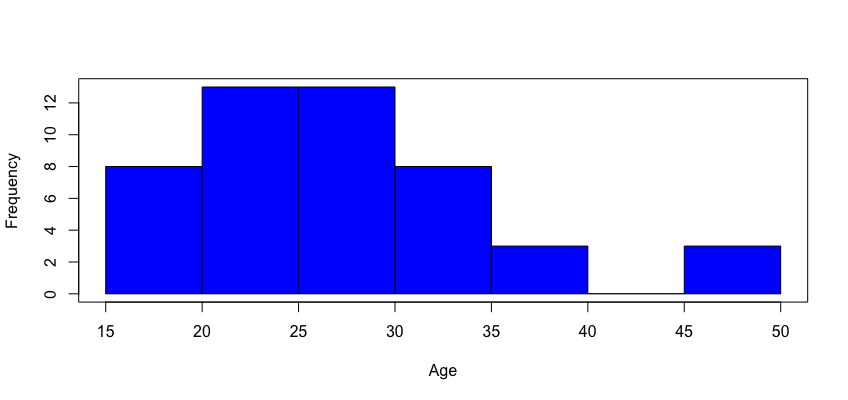}
		\caption{Histogram of real age in a human population. The age distribute diversity from 15 to 30, however, there is a few of individual of age from 35 to 40 and 45 to 50. And there exists no individual have age from 40 to 45.}
		\label{age}
	\end{figure}
\end{center}

\begin{center}
	\begin{figure}
		\includegraphics[width=\textwidth]{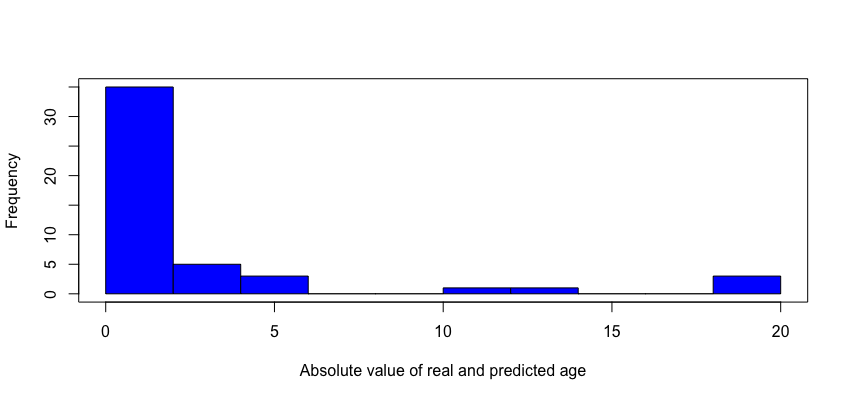}
		\caption{Histogram of difference between real and predicted age for OAE. In the first column, in which the difference between real and estimate age is very small closing to zero, this means more accuracy between real and predicted age. Almost the ages from 20 to 35 lie in this column.}
		\label{realetpredict}
	\end{figure}
\end{center}

\begin{center}
	\begin{figure}
		\includegraphics[width=\textwidth]{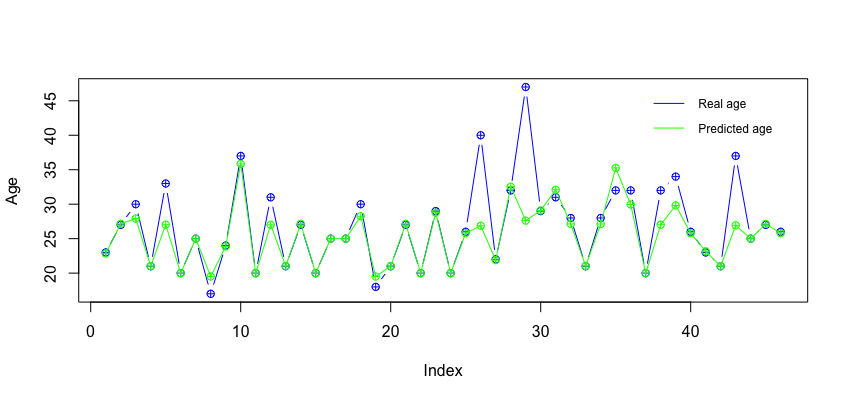}
		\caption{Real and Predicted Age. By using the optimized parameters of $\gamma=1$, $l=10$ and $\lambda >0$ depending on age class, we obtain almost the exacted ages belonging to the age class $[20,30]$ with $\lambda$ around 15. For instance, we can predict very well the exact ages $20,\ 21,\ 23,\ 24,\ 25,\ 27,\ 29$ corresponding to the predicted ages $19.50,\ 21.02,\ 22.83,\ 23.87,\ 24.89,\ 27.15,\ 28.76$.}
		\label{fig:realandpredictedage}
	\end{figure}
\end{center}

Hence in figure \ref{age} and Figure \ref{fig:realandpredictedage}, we applied effectively our proposed estimation function in predicting age from its TEOAE data. By choosing the optimal parameters $\gamma,\ l$ and $\lambda$ we could predict very well the exact ages belonging to the age class $[20,30]$ and negligible errors in other age classes. This is quite reasonable when seeing in the Figure \ref{age} that the age distributed diversity almost from 20 to 30 years old, so our proposed estimation function learnt very well to predict age in this age class. Thus by using the distribution regression model, we investigated the relationship between the evoked responses from clicks covering the frequencies range up to 10kHz and its evolutionary ages. 

\section{Discussion}\label{sec5} 
In this paper, we have introduced a new estimated function for regression model with distribution inputs. More precisely, we effectively used class of positive definite kernel produced by Wasserstein distance, built in \cite{bachoc2017gaussian} by proving that it is a kind of universal kernel. Researching the  universal kernel theories, we detected a very good property of our universal kernel to build a RKHS. Then we obtained a particular estimation from Representer theorem for our distribution regression problem, these works showed that the relation between the random distribution and the real number response can be learnt by using directly the regularized empirical risk over RKHS. Our proposed estimation is clearly better than state-of-the-art-ones in simulated data. More interestingly, we researched successfully TEOAE curve of each individual in human population as a distribution which after normalization. We then investigated the relationship between age and its TEOAE that the response involves with age and the effect of age in hearing issues is deeply related to the change of cochlear. This is a new interesting approach in the field of Biostatistics, in which we indicated the evolution of hearing capacity under statistical domain - distribution regression model. We believe that our paper tackles an important issue for data science experts willing to predict problems in regression with probability distributions as input. The extension of this work on distributions for general dimensions should be addressed in a further work, using for instance as a kernel the one built in \cite{bachoc2018gaussian}.


	\pagebreak

\nocite{*}
\bibliographystyle{apalike} 
\bibliography{WRKHS.bib}

\begin{thebibliography}{}

\bibitem[Alvarez-Esteban et~al., 2015]{alvarez2015wide}
Alvarez-Esteban, P.~C., del Barrio, E., Cuesta-Albertos, J., and Matr{\'a}n, C.
  (2015).
\newblock Wide consensus for parallelized inference.
\newblock {\em arXiv preprint arXiv:1511.05350}.

\bibitem[Aza{\"\i}s and Bardet, 2012]{azais2006modele}
Aza{\"\i}s, J.-M. and Bardet, J.-M. (2012).
\newblock {\em Le mod{\`e}le lin{\'e}aire par l'exemple: r{\'e}gression,
  analyse de la variance et plans d'exp{\'e}rience illustr{\'e}s par R et SAS}.
\newblock Dunod.

\bibitem[Bachoc et~al., 2017]{bachoc2017gaussian}
Bachoc, F., Gamboa, F., Loubes, J.-M., and Venet, N. (2017).
\newblock A gaussian process regression model for distribution inputs.
\newblock {\em IEEE Transactions on Information Theory}, 64(10):6620--6637.

\bibitem[Bachoc et~al., 2018]{bachoc2018gaussian}
Bachoc, F., Suvorikova, A., Loubes, J.-M., and Spokoiny, V. (2018).
\newblock Gaussian process forecast with multidimensional distributional
  entries.
\newblock {\em arXiv preprint arXiv:1805.00753}.

\bibitem[Berlinet and Thomas-Agnan, 2011]{berlinet2011reproducing}
Berlinet, A. and Thomas-Agnan, C. (2011).
\newblock {\em Reproducing kernel Hilbert spaces in probability and
  statistics}.
\newblock Springer Science \& Business Media.

\bibitem[Christmann and Steinwart, 2010]{christmann2010universal}
Christmann, A. and Steinwart, I. (2010).
\newblock Universal kernels on non-standard input spaces.
\newblock In {\em Advances in neural information processing systems}, pages
  406--414.

\bibitem[Collet et~al., 1990]{collet1990age}
Collet, L., Moulin, A., Gartner, M., and Morgon, A. (1990).
\newblock Age-related changes in evoked otoacoustic emissions.
\newblock {\em Annals of Otology, Rhinology \& Laryngology}, 99(12):993--997.

\bibitem[Cowling, 1983]{cowling1983harmonic}
Cowling, M.~G. (1983).
\newblock Harmonic analysis on semigroups.
\newblock {\em Annals of Mathematics}, pages 267--283.

\bibitem[Eggermont, 2017]{eggermont2017hearing}
Eggermont, J.~J. (2017).
\newblock {\em Hearing Loss: Causes, Prevention, and Treatment}.
\newblock Academic Press.

\bibitem[Embrechts and Hofert, 2013]{embrechts2013note}
Embrechts, P. and Hofert, M. (2013).
\newblock A note on generalized inverses.
\newblock {\em Mathematical Methods of Operations Research}, 77(3):423--432.

\bibitem[Gretton et~al., 2012]{gretton2012kernel}
Gretton, A., Borgwardt, K.~M., Rasch, M.~J., Sch{\"o}lkopf, B., and Smola, A.
  (2012).
\newblock A kernel two-sample test.
\newblock {\em Journal of Machine Learning Research}, 13(Mar):723--773.

\bibitem[Joris et~al., 2011]{joris2011frequency}
Joris, P.~X., Bergevin, C., Kalluri, R., Mc~Laughlin, M., Michelet, P., van~der
  Heijden, M., and Shera, C.~A. (2011).
\newblock Frequency selectivity in old-world monkeys corroborates sharp
  cochlear tuning in humans.
\newblock {\em Proceedings of the National Academy of Sciences},
  108(42):17516--17520.

\bibitem[Kadri et~al., 2010]{kadri2010nonlinear}
Kadri, H., Duflos, E., Preux, P., Canu, S., and Davy, M. (2010).
\newblock Nonlinear functional regression: a functional rkhs approach.
\newblock In {\em Thirteenth International Conference on Artificial
  Intelligence and Statistics (AISTATS'10)}, volume~9, pages 374--380.

\bibitem[Kimeldorf and Wahba, 1971]{kimeldorf1971some}
Kimeldorf, G. and Wahba, G. (1971).
\newblock Some results on tchebycheffian spline functions.
\newblock {\em Journal of mathematical analysis and applications},
  33(1):82--95.

\bibitem[Kolouri et~al., 2018]{kolouri2018sliced}
Kolouri, S., Rohde, G.~K., and Hoffmann, H. (2018).
\newblock Sliced wasserstein distance for learning gaussian mixture models.
\newblock In {\em Proceedings of the IEEE Conference on Computer Vision and
  Pattern Recognition}, pages 3427--3436.

\bibitem[Kolouri et~al., 2015]{KolouriZouRohde}
Kolouri, S., Zou, Y., and Rohde, G.~K. (2015).
\newblock Sliced {W}asserstein kernels for probability distributions.
\newblock {\em CoRR}, abs/1511.03198.

\bibitem[Kutner et~al., 2004]{kutner2004applied}
Kutner, M.~H., Nachtsheim, C., and Neter, J. (2004).
\newblock {\em Applied linear regression models}.
\newblock McGraw-Hill/Irwin.

\bibitem[Micchelli et~al., 2006]{micchelli2006universal}
Micchelli, C.~A., Xu, Y., and Zhang, H. (2006).
\newblock Universal kernels.
\newblock {\em Journal of Machine Learning Research}, 7(Dec):2651--2667.

\bibitem[Muandet et~al., 2012]{muandet2012learning}
Muandet, K., Fukumizu, K., Dinuzzo, F., and Sch{\"o}lkopf, B. (2012).
\newblock Learning from distributions via support measure machines.
\newblock In {\em Advances in neural information processing systems}, pages
  10--18.

\bibitem[Muandet et~al., 2017]{muandet2017kernel}
Muandet, K., Fukumizu, K., Sriperumbudur, B., Sch{\"o}lkopf, B., et~al. (2017).
\newblock Kernel mean embedding of distributions: A review and beyond.
\newblock {\em Foundations and Trends{\textregistered} in Machine Learning},
  10(1-2):1--141.

\bibitem[Neter et~al., 1996]{neter1996applied}
Neter, J., Kutner, M.~H., Nachtsheim, C.~J., and Wasserman, W. (1996).
\newblock {\em Applied linear statistical models}, volume~4.
\newblock Irwin Chicago.

\bibitem[O-Uchi et~al., 1994]{o1994age}
O-Uchi, T., Kanzaki, J., Satoh, Y., Yoshihara, S., Ogata, A., Inoue, Y., and
  Mashino, H. (1994).
\newblock Age-related changes in evoked otoacoustic emission in normal-hearing
  ears.
\newblock {\em Acta Oto-Laryngologica}, 114(sup514):89--94.

\bibitem[Oliva et~al., 2014]{oliva2014fast}
Oliva, J., Neiswanger, W., P{\'o}czos, B., Schneider, J., and Xing, E. (2014).
\newblock Fast distribution to real regression.
\newblock In {\em Artificial Intelligence and Statistics}, pages 706--714.

\bibitem[Peyr{\'e} et~al., 2016]{peyre2016gromov}
Peyr{\'e}, G., Cuturi, M., and Solomon, J. (2016).
\newblock Gromov-{W}asserstein averaging of kernel and distance matrices.
\newblock In {\em ICML 2016}.

\bibitem[P{\'o}czos et~al., 2013]{poczos2013distribution}
P{\'o}czos, B., Singh, A., Rinaldo, A., and Wasserman, L.~A. (2013).
\newblock Distribution-free distribution regression.
\newblock In {\em AISTATS}, pages 507--515.

\bibitem[Preda, 2007]{preda2007regression}
Preda, C. (2007).
\newblock Regression models for functional data by reproducing kernel hilbert
  spaces methods.
\newblock {\em Journal of statistical planning and inference}, 137(3):829--840.

\bibitem[Ramsay and Silverman, 2007]{ramsay2007applied}
Ramsay, J.~O. and Silverman, B.~W. (2007).
\newblock {\em Applied functional data analysis: methods and case studies}.
\newblock Springer.

\bibitem[Smola et~al., 2007]{smola2007hilbert}
Smola, A., Gretton, A., Song, L., and Sch{\"o}lkopf, B. (2007).
\newblock A hilbert space embedding for distributions.
\newblock In {\em International Conference on Algorithmic Learning Theory},
  pages 13--31. Springer.

\bibitem[Smola and Sch{\"o}lkopf, 1998]{smola1998learning}
Smola, A.~J. and Sch{\"o}lkopf, B. (1998).
\newblock {\em Learning with kernels}, volume~4.
\newblock Citeseer.

\bibitem[Sriperumbudur et~al., 2010a]{sriperumbudur2010relation}
Sriperumbudur, B., Fukumizu, K., and Lanckriet, G. (2010a).
\newblock On the relation between universality, characteristic kernels and rkhs
  embedding of measures.
\newblock In {\em Proceedings of the Thirteenth International Conference on
  Artificial Intelligence and Statistics}, pages 773--780.

\bibitem[Sriperumbudur et~al., 2011]{sriperumbudur2011universality}
Sriperumbudur, B.~K., Fukumizu, K., and Lanckriet, G.~R. (2011).
\newblock Universality, characteristic kernels and rkhs embedding of measures.
\newblock {\em Journal of Machine Learning Research}, 12(Jul):2389--2410.

\bibitem[Sriperumbudur et~al., 2008]{sriperumbudur2008injective}
Sriperumbudur, B.~K., Gretton, A., Fukumizu, K., Lanckriet, G.~R., and
  Sch{\"o}lkopf, B. (2008).
\newblock Injective hilbert space embeddings of probability measures.
\newblock In {\em COLT}, volume~21, pages 111--122.

\bibitem[Sriperumbudur et~al., 2010b]{sriperumbudur2010hilbert}
Sriperumbudur, B.~K., Gretton, A., Fukumizu, K., Sch{\"o}lkopf, B., and
  Lanckriet, G.~R. (2010b).
\newblock Hilbert space embeddings and metrics on probability measures.
\newblock {\em Journal of Machine Learning Research}, 11(Apr):1517--1561.

\bibitem[Steinwart, 2001]{steinwart2001influence}
Steinwart, I. (2001).
\newblock On the influence of the kernel on the consistency of support vector
  machines.
\newblock {\em Journal of machine learning research}, 2(Nov):67--93.

\bibitem[Szab{\'o} et~al., 2016]{szabo2016learning}
Szab{\'o}, Z., Sriperumbudur, B.~K., P{\'o}czos, B., and Gretton, A. (2016).
\newblock Learning theory for distribution regression.
\newblock {\em The Journal of Machine Learning Research}, 17(1):5272--5311.

\bibitem[Vedaldi et~al., 2009]{vedaldi2009multiple}
Vedaldi, A., Gulshan, V., Varma, M., and Zisserman, A. (2009).
\newblock Multiple kernels for object detection.
\newblock In {\em 2009 IEEE 12th international conference on computer vision},
  pages 606--613. IEEE.

\bibitem[Villani, 2008]{villani2008optimal}
Villani, C. (2008).
\newblock {\em Optimal transport: old and new}, volume 338.
\newblock Springer Science \& Business Media.

\bibitem[Whitt, 1976]{whitt1976bivariate}
Whitt, W. (1976).
\newblock Bivariate distributions with given marginals.
\newblock {\em The Annals of statistics}, pages 1280--1289.

\end{thebibliography}

\end{document}